\documentclass[a4paper,11pt,onecolumn]{amsart}
\usepackage{amsfonts}
\usepackage{pb-diagram}
\title[Homology and entropy]{Entropy of automorphisms, homology and the intrinsic polynomial structure of nilpotent groups}




\newtheorem{thm}{Theorem}
\newtheorem{lemma}[thm]{Lemma}

\newtheorem{cor}[thm]{Corollary}
\newtheorem{prop}[thm]{Proposition}

\newtheorem{quest}[thm]{Question}

\numberwithin{thm}{section}

\newcommand{\al}{\alpha}
\newcommand{\eps}{\epsilon}

\newcommand{\bZ}{\mathbb{Z}}
\newcommand{\bR}{\mathbb{R}}
\newcommand{\bC}{\mathbb{C}}

\DeclareMathOperator{\Aut}{Aut}
\DeclareMathOperator{\Out}{Out}

\DeclareMathOperator{\rk}{rk}

\newcommand{\bQ}{\mathbb{Q}}

\bibliographystyle{plain} 
\author[T. Koberda]{Thomas Koberda}
\address{Department of Mathematics\\ Harvard University\\ 1 Oxford St.\\ Cambridge, MA 02138 }
\email{ koberda@math.harvard.edu}
\subjclass[2010]{Primary 20E36; Secondary 37A35, 20F18}
\keywords{Residually nilpotent group, entropy of automorphisms of groups, pseudo-Anosov dilatation}
\begin{document}
\begin{abstract}
We study the word length entropy of automorphisms of residually nilpotent groups, and how the entropy of such group automorphisms relates to the entropy of induced automorphisms on various nilpotent quotients.  We show that much like the structure of a nilpotent group is dictated to a large degree by its abelianization, the entropy of an automorphism of a nilpotent group is dictated by its entropy on the abelianization.  We give some applications to the study of pseudo-Anosov homeomorphisms of surfaces.  In particular, we show that if $\psi$ is a non--homological pseudo-Anosov homeomorphism of a surface $\Sigma$ with dilatation $K$ and $N$ is any nilpotent quotient of any finite index characteristic subgroup of $\pi_1(\Sigma)$ to which $\psi$ descends, the entropy of $\psi$ viewed as an automorphism of $N$ is bounded away from $K$.  This answers a question of D. Sullivan.
\end{abstract}
\maketitle
\begin{center}
\today
\end{center}
\tableofcontents
\section{Introduction}
Let $G$ be a finitely generated group and $\phi\in\Aut(G)$.  The goal of this paper is to understand some of the asymptotic properties of $\phi$ which can be gleaned from considering certain quotients of $G$ to which the automorphism $\phi$ descends.  In particular, we will be interested in the relationship between quotients of subgroups of $G$ and the entropy of $\phi$.

The underlying philosophy behind this paper is as follows: it is well--known that the structure of finitely generated nilpotent groups is essentially polynomial in nature.  It can be easily deduced from commutator identities that virtually nilpotent groups have polynomial word growth.  Conversely, Gromov showed that groups with polynomial growth are virtually nilpotent (see \cite{Gr}, also \cite{Kl}).  In \cite{Ka}, Karidi described the intrinsically polynomial structure of balls inside of real simply connected nilpotent Lie groups.  We will establish the notion that the automorphisms of a nilpotent group inherit the polynomial--like properties of the underlying group.

We first recall the notion of entropy: let $S$ be a finite generating set for $G$.  There is a canonical (up to quasi-isometry) length function $\ell=\ell_S$ on $G$, which we call the {\bf word length}.  By definition, $\ell(g)$ is the function $d(g,1)$, where $d$ denotes the graph metric in the Cayley graph of $G$ with respect to the generating set $S$.  The {\bf entropy} $K_{\phi}$ is defined by \[\max_{s\in S}\lim_{n\to\infty}\ell(\phi^n(s))^{1/n}.\]  Since $\ell(\phi^n(s))$ is submultiplicative in $n$, the limit exists.  It is standard that $K_{\phi}$ is independent of $S$ (see \cite{FLP}).  We note that our definition of entropy differs slightly from the standard one.  Usually, the entropy of an automorphism $h(\phi)$ is given by $\log K_{\phi}$.  We say that $\phi$ has trivial entropy when $K_{\phi}=1$, which in the standard terminology means that $\phi$ has no entropy.

It is easy to check that the entropy satisfies $K_{\phi}\geq 1$, and that conjugation by an element of $G$ has trivial entropy.  It is slightly less trivial to see that the entropy of an automorphism depends only on its image in $\Out(G)$.  It follows that if $\Out(G)$ is finite then $G$ admits no automorphisms with nontrivial entropy.  For instance, Mostow--Prasad rigidity implies that lattices in $PSL_2(\bC)$ admit no positive entropy automorphisms (see \cite{BP}).

The main motivation for the study entropy in this paper comes from the theory of {\bf pseudo-Anosov homeomorphisms}.  Recall that a homeomorphism $\psi$ of a surface $\Sigma$ of genus $g\geq 1$ and $n\geq 0$ punctures has a Nielsen--Thurston classification.  This classification says that $\psi$ is either finite order, reducible, or pseudo-Anosov.  In the first case, $\psi$ has a power which is isotopic to the identity.  In the second case, there is a finite nonempty collection $\mathcal{C}$ of disjoint, non--peripheral, essential simple closed curves which is preserved by $\psi$, up to isotopy.  In the third case, there is a pair of measured foliations which are preserved by a representative of $\psi$.  The foliations together with $\psi$ have a canonical real algebraic integer $K_{\psi}$ associated to them, called the {\bf dilatation} of $\psi$.  This is the factor by which the stable and unstable foliations are stretched and contracted, respectively.

Since a mapping class can be regarded as an outer automorphism of the fundamental group of the surface, we can talk about the entropy of $\psi$ as an outer automorphism.  It turns out that the entropy of a pseudo-Anosov homeomorphism coincides with its dilatation (see \cite{FLP}).  A rough reason for this is that the stable and unstable foliations determine a piecewise Euclidean metric $\ell_{euc}$ on $\Sigma$ which asymptotically agrees with any hyperbolic metric on $\Sigma$, in the sense that for any sufficiently long word $g\in\pi_1(\Sigma)$ represented by a hyperbolic geodesic $\gamma$, we have \[\ell_{euc}(\gamma)\sim\ell_{hyp}(\gamma)\sim\ell_S(g).\]

To compute the dilatation of a pseudo-Anosov homeomorphism and to show that it is an algebraic integer, one associates a canonical characteristic class to the stable foliation of $\psi$, which turns out to be an eigenclass for the action of $\psi$ on the real homology of the surface.  When the stable foliation is orientable, then the action of $\psi$ on $H_1(\Sigma,\bR)$ has a simple maximal eigenvalue equal to $K_{\psi}$, and this number is an algebraic integer since it is a root of the characteristic polynomial of a matrix with integer entries (see \cite{FLP}, \cite{Kob}, \cite{Mc1}, \cite{Th}).  In this case, we call $\psi$ a {\bf homological pseudo-Anosov}.  When the foliation is not orientable, there is a double cover which is branched over the odd--order singularities of the foliation such that the foliation lifts to an orientable one.  Furthermore, $\psi$ lifts to this double cover, preserving the folitation.  Typically, pseudo-Anosov homeomorphisms do not have even locally orientable stable and unstable foliations.  A non--locally orientable foliation will remain non--orientable on each cover of the base surface.  We refer to such pseudo-Anosov homeomorphisms as {\bf non--homological}, since $K_{\psi}$ will never be an eigenvalue of the action of $\psi$ on the homology of any cover.

On an unbranched cover of $\Sigma'\to \Sigma$, the simplicity and maximality of $K_{\psi}$ as mentioned above implies that any other eigenvalue of the action of $\psi$ on $H_1(\Sigma',\bR)$ is strictly smaller than $K_{\psi}$.  In particular, if the stable foliation is not orientable, the spectral radius of the action of $\psi$ on $H_1(\Sigma',\bR)$ is strictly smaller than $K_{\psi}$.  This follows from the fact that the eigenclass corresponding to $K_{\psi}$ is in the kernel of the map on real homology induced by the double branched orientation cover (see \cite{KobSil}, \cite{LT}).  We remark that if $\Sigma'\to\Sigma$ is a finite cover to which a pseudo-Anosov homeomorphism lifts, the lifted homeomorphism will still be pseudo-Anosov with the same dilatation (see \cite{Kob}).  Furthermore, the spectral radius of the action of $\psi$ on $H_1(\Sigma',\bR)$ is bounded above by $K_{\psi}$, where $\Sigma'$ is any finite $\psi$--invariant cover of $\Sigma$.  This follows from general facts about Lipschitz homeomorphisms acting on the homology of compact manifolds.  A precise statement and commentary can be found in \cite{Kob}.

General principles imply that if $G$ is a finitely generated group and $\phi$ is an automorphism with entropy $K_{\phi}$, and if $\overline{G}$ is a quotient to which $\phi$ descends to an automorphism $\overline{\phi}$, we obtain the inequality $K_{\overline{\phi}}\leq K_{\phi}$.  This follows from the fact that taking word length is non--increasing under taking quotients.  One might be tempted to guess that $K_{\psi}$ can be obtained as the supremum of such homological spectral radii, as $\Sigma'$ varies over all $\psi$--invariant finite covers of $\Sigma$.  However, we have the following result of McMullen (cf. \cite{Kob}):

\begin{thm}[\cite{Mc2}]
Suppose that $\psi$ is a pseudo-Anosov homeomorphism of a surface with dilatation $K$.  Then either $K$ is the spectral radius of the action of $\psi$ on a finite cover of $\Sigma$ and the stable foliation of $\psi$ becomes orientable on that cover, or there is an $0\leq\al<1$ such that the spectral radius of the action of $\psi$ on $H_1(\Sigma',\bR)$ is no larger than $\al\cdot K$, where $\Sigma'$ is any finite $\psi$--invariant cover of $\Sigma$.
\end{thm}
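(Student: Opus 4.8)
The plan is to reformulate the homological spectral radii over finite covers of $\Sigma$ in terms of \emph{twisted dilatations} and then to run a compactness argument in the space of finite--dimensional unitary representations of $\pi_1(\Sigma)$. Fix a hyperbolic structure on $\Sigma$, so that word length, hyperbolic length and the Euclidean length of the invariant singular flat metric of $\psi$ are mutually asymptotic. Let $\mathcal{F}^s$ be the stable foliation of $\psi$ and let $w$ be its orientation character --- a priori a homomorphism to $\{\pm 1\}$ from the fundamental group of $\Sigma$ with the singularities of $\mathcal{F}^s$ removed. The first step is a dichotomy on orientability: either $w$ descends to a homomorphism $\pi_1(\Sigma)\to\{\pm 1\}$ --- equivalently, $\mathcal{F}^s$ is orientable on some finite unbranched cover of $\Sigma$ --- or it does not, in which case $\mathcal{F}^s$ stays non--orientable on \emph{every} finite cover of $\Sigma$, the orientation cover being genuinely branched over the odd--order singularities of $\mathcal{F}^s$ and hence not an unbranched cover of $\Sigma$. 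These are the two alternatives of the theorem, and all the work is to produce the constant $\al$ in the second case.

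Next, record the elementary upper bound. If $\Sigma'\to\Sigma$ is a finite $\psi$--invariant cover with deck group $Q=\pi_1(\Sigma)/N$ and $\psi$ acting on $Q$, then $H_1(\Sigma';\bC)$ splits as $\bigoplus_\rho H_1(\Sigma;\bC_\rho)^{\oplus\dim\rho}$ over the irreducible unitary representations $\rho$ of $Q$, and $\psi$ permutes these pieces according to its action $\rho\mapsto\rho\circ\psi^{-1}$ on the unitary dual $\widehat{Q}$. On a $\psi$--orbit of length $m$ the spectral radius of $\psi$ equals $\lambda(\psi^m,\rho)^{1/m}$, where $\lambda(\psi^m,\rho)$ is the spectral radius of the action of $\psi^m$ on the twisted cohomology $H^1(\pi_1(\Sigma);\bC_\rho)$, well defined since $\rho\circ\psi^m\cong\rho$. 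Because $\psi^m$ is represented by a homeomorphism that is bi--Lipschitz with constant comparable to $K^m$ for the singular flat metric, and twisted cohomology is computed from bounded twisted cocycles, one gets $\lambda(\psi^m,\rho)\le K^m$ for all $\rho$ and all $m$; hence every homological spectral radius over a finite $\psi$--invariant cover is at most $K$. Equality is detected by the invariant transverse measure of $\mathcal{F}^s$: it represents a class in the $w$--twisted cohomology $H^1(\Sigma;\bR_w)$ which is the eigenclass for the eigenvalue $K$, and this class becomes a genuine untwisted class on $\Sigma'$ exactly when $\mathcal{F}^s$ is orientable there. Thus $K$ occurs as an eigenvalue of $\psi$ on some $H_1(\Sigma';\bR)$ only in the first case of the dichotomy, where moreover the simplicity and maximality of $K$ from the orientable theory (see \cite{FLP}, \cite{Kob}) force it to be the spectral radius; in the second case $\lambda(\psi^m,\rho)<K^m$ strictly for every finite--image $\rho$ and every $m$.

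The main obstacle is to upgrade this strict inequality into a \emph{uniform} one: to find a single $\al<1$ with $\lambda(\psi^m,\rho)\le(\al K)^m$ for all finite--image representations $\rho$ of $\pi_1(\Sigma)$ with $\rho\circ\psi^m\cong\rho$ and all $m$. For this, regard $\lambda(\psi,-)$ and its powers as functions on the space of finite--dimensional unitary representations of $\pi_1(\Sigma)$, equipped with the Fell topology on the unitary dual (equivalently, convergence of normalised characters). One shows that this function is upper semicontinuous and that it attains the extreme value $K$ precisely on the representations that ``contain'' the orientation character $w$ in the sense of the previous paragraph --- an empty locus in the second case, since there $w$ does not even descend to $\pi_1(\Sigma)$. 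As $\Sigma$ is a surface group and hence residually finite, the finite--image representations are dense in this space; combining density, compactness of the relevant part of the unitary dual, and upper semicontinuity with an empty maximum locus yields the gap. The genuinely delicate points are the semicontinuity of $\lambda$ and the uniformity over all powers $m$ at once; the latter is most transparent on the mapping torus $M_\psi$, whose finite covers carry these twisted dilatations as houses of suitable specialisations of (covering versions of) McMullen's Teichm\"uller polynomial along the fibered cone of $M_\psi$, which can then be estimated uniformly. Everything else --- the isotypic decomposition, the bounded--cocycle estimate, and the behaviour of the transverse measure under covers --- is bookkeeping.
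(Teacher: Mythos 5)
The theorem you are proving is one the paper does not prove at all: it is quoted verbatim from McMullen's \cite{Mc2}, ``Entropy on Riemann surfaces and the Jacobians of finite covers,'' and serves only as motivation for the rest of Koberda's paper. There is therefore no internal argument to compare your attempt against. Evaluated against McMullen's own proof, your sketch does capture the correct architecture: decompose $H_1$ of a $\psi$--invariant cover into isotypic pieces attached to unitary representations $\rho$ of the deck group; observe that the transverse measure of $\mathcal{F}^s$ gives the $K$--eigenclass in the $w$--twisted cohomology and that $K$ appears untwisted on $\Sigma'$ exactly when $\mathcal{F}^s$ becomes orientable there; and obtain the uniform gap $\al<1$ by a compactness argument over the space of unitary representations of $\pi_1(\Sigma)$ using residual finiteness to approximate finite--image representations. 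These are indeed the moving parts of McMullen's argument.

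Where your proposal stays at the level of a plan rather than a proof is exactly where you flag difficulty, and those points are not decorative. First, the claim that $\rho\mapsto\lambda(\psi,\rho)$ is upper semicontinuous on the unitary dual is doing all the work in the gap argument, and you give no mechanism for it. McMullen does not push a bare bi--Lipschitz cocycle estimate through a limiting argument; he works with harmonic representatives so that the relevant quantity becomes a ratio of $L^2$ norms which varies continuously with the flat unitary bundle, and the bound $\lambda\le K$ and the strict inequality away from the $w$--locus both drop out of a pointwise Hodge--theoretic inequality on the flat surface. Without something like that, upper semicontinuity for the operator norm of a map on twisted cohomology with varying coefficients is genuinely nontrivial (cohomology groups can jump in dimension, eigenvalues can coalesce, etc.). Second, the detour through the mapping torus and ``covering versions of the Teichm\"uller polynomial'' to handle all powers $m$ simultaneously is not in McMullen's proof and, as written, is hand--waving: you would need to explain how specializing such a polynomial bounds $\lambda(\psi^m,\rho)^{1/m}$ uniformly, and the Teichm\"uller polynomial controls foliation data on the fibered cone, not arbitrary unitary twists. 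In McMullen's treatment the uniformity over $m$ comes for free once the per--$\rho$ estimate is phrased correctly. So: right skeleton, but the two lemmas you acknowledge as delicate (semicontinuity, and uniformity in $m$) are precisely the content, and your proposed routes to them would not survive scrutiny without the Hodge--theoretic input you have omitted.
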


McMullen's theorem says that one cannot find the entropy of a pseudo-Anosov homeomorphism by taking finite index subgroups of $\pi_1(\Sigma)$ and abelianizing.  One is thus naturally led to larger quotients of $\pi_1(\Sigma)$.  The simplest infinite nonabelian quotients of $\pi_1(\Sigma)$ are nilpotent quotients.  Furthermore, if $\psi$ is a nontrivial automorphism of $\pi_1(\Sigma)$ then its nontriviality is visible on a nilpotent quotient of $\pi_1(\Sigma)$, since $\pi_1(\Sigma)$ is residually nilpotent.

Nilpotent quotients of groups have factored significantly into the understanding of the rational homotopy type of manifolds, as expounded by many authors (see \cite{S}, for instance).  D. Sullivan has asked (\cite{SullMc}) whether the entropy of a pseudo-Anosov homeomorphism on sequences of larger and larger nilpotent quotients of $\pi_1(\Sigma)$ (or perhaps of finite index subgroups of $\pi_1(\Sigma)$) converges to the dilatation of $\psi$.

The primary results in this paper implies that homology of finite index subgroups of $\pi_1(\Sigma)$ divulges just as much information about entropy of automorphisms as arbitrary nilpotent quotients of finite index subgroups.  We thus answer Sullivan's question in the negative:

\begin{thm}\label{t:pos}
Let $\phi\in\Aut(N)$ and let $\overline{\phi}$ be the induced automorphism of $H_1(N,\bZ)$.  Then $K_{\phi}=K_{\overline{\phi}}$.
\end{thm}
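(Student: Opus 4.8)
The plan is to prove $K_\phi\le K_{\overline\phi}$ by induction on the nilpotency class $c$ of $N$; the reverse inequality $K_{\overline\phi}\le K_\phi$ is already free, being an instance of the general quotient principle recalled above applied to $N\twoheadrightarrow H_1(N,\bZ)$. First I would dispose of two side issues. The torsion subgroup $T\le N$ is finite and characteristic, and replacing $N$ by $N/T$ affects neither side: projection only shortens word length, while a word over the generators representing the image of $g$ lifts to a word representing $g$ up to bounded error from $T$. So I may take $N$ torsion--free. Also, since the images in $H_1(N,\bR)$ of a finite generating set $S$ span, the entropy $K_{\overline\phi}$ equals the spectral radius $K:=\rho(\overline\phi)$ of $\overline\phi$ on $H_1(N,\bR)$, which I would use freely. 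The statement to be carried through the induction is the quantitative one: there exist $C\ge1$ and $a\ge0$ with $\ell_S(\phi^n(s))\le C\,n^{a}K^{n}$ for all $s\in S$ and all $n$; then $K_\phi=\max_s\limsup_n\ell_S(\phi^n(s))^{1/n}\le K$.

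The base case $c=1$ is immediate, since then $N=H_1(N,\bZ)$, $\phi=\overline\phi$, and $\ell_S(\overline\phi^n(s))$ is comparable to the norm of $\overline\phi^n$ applied to a fixed lattice vector, which the Jordan form bounds by a polynomial in $n$ times $K^n$. For the step, let $c\ge2$ and put $Z=\gamma_c(N)$: it is central, finitely generated, torsion--free abelian, and characteristic, and $\overline N:=N/Z$ has class $c-1$ with $\phi$ descending to $\phi_{\overline N}$. As $c\ge2$ we have $Z\subseteq[N,N]$, so $H_1(\overline N)=H_1(N)$ with the same induced automorphism, so the inductive hypothesis gives $\ell_{\overline S}(\phi_{\overline N}^n(\overline s))\le C'n^{a'}K^n$. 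Reading an almost--geodesic word over $\overline S$ as a word over $S$ provides a lift $u_n\in N$ of $\phi_{\overline N}^n(\overline s)$ with $\ell_S(u_n)\le C'n^{a'}K^n$, and then $\phi^n(s)=u_nz_n$ for a unique $z_n\in Z$, so $\ell_S(\phi^n(s))\le\ell_S(u_n)+\ell_S(z_n)$ and everything comes down to bounding $\ell_S(z_n)$.

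To bound $\ell_S(z_n)$ I would use two facts about $Z=\gamma_c(N)$, both instances of the intrinsic polynomial structure (Karidi, \cite{Ka}): for a fixed norm on $Z$, $\|z\|_Z\le C_1\ell_S(z)^c$ for all $z\in Z$ (a length--$L$ word has weight--$c$ collected coordinates of size $O(L^c)$ --- the easy half of polynomial growth), and conversely $\ell_S(z)\le C_2\|z\|_Z^{1/c}$ (because $\gamma_{c+1}=1$ makes the $c$--fold commutator multilinear, $[s_{i_1}^{k_1},\dots,s_{i_c}^{k_c}]$ realizes the $k_1\cdots k_c$--th power of a weight--$c$ basic commutator in $O(\sum|k_j|)$ letters, so powers of a generator of $Z$ are short). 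Comparing $\phi^n(s)=u_nz_n$ with $\phi^n(s)=\phi(u_{n-1})\phi(z_{n-1})$, using centrality of $Z$ and writing $\psi:=\phi|_Z=\mathrm{gr}_c(\phi)$, yields the recursion $z_n=\psi(z_{n-1})+\zeta_n$ in $Z$ with $\zeta_n:=u_n^{-1}\phi(u_{n-1})\in Z$; since $\phi$ is Lipschitz for $\ell_S$ this gives $\ell_S(\zeta_n)\le C_4n^{a'}K^n$, hence $\|\zeta_n\|_Z\le C_5n^{ca'}K^{cn}$. Unwinding, $z_n=\psi^n(z_0)+\sum_{k=1}^n\psi^{\,n-k}(\zeta_k)$, and the key point is that $\rho(\psi)\le K^c$: indeed $\mathrm{gr}(N)\otimes\bQ$ is a graded Lie algebra generated in degree one by the image of $S$, hence a $\phi$--equivariant quotient of the free Lie algebra on $H_1(N,\bQ)$, whose degree--$c$ part is a $\phi$--subquotient of $H_1(N,\bQ)^{\otimes c}$ with eigenvalues $c$--fold products of eigenvalues of $\overline\phi$. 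So $\|\psi^j\|\le C_6 j^{\rk Z}K^{cj}$, and summing the geometric--type sum gives $\|z_n\|_Z\le C_7 n^{\rk Z+ca'+1}K^{cn}$, whence $\ell_S(z_n)\le C_8 n^{a''}K^n$ by the second fact; combined with $\ell_S(u_n)$ this closes the induction.

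The main obstacle is exactly the pair of estimates relating word length in $Z$ to Mal'cev coordinates, i.e. the quantitative polynomial structure itself; the lower bound $\ell_S(z)\le C_2\|z\|_Z^{1/c}$ is the genuinely nontrivial direction, though restricted to the single central layer $\gamma_c$ it reduces to elementary multilinear commutator identities as indicated. Secondary care is needed for the eigenvalue bound $\rho(\mathrm{gr}_c(\phi))\le K^c$ (checking equivariance and grading of the map from the free Lie algebra) and for the bookkeeping that upgrades the inductive hypothesis from ``entropy $\le K$'' to the polynomial--times--$K^n$ form that the induction actually propagates. The remaining ingredients --- the torsion and spectral--radius reductions, Lipschitz--ness of $\phi$, lifting words through the central quotient, and unwinding a linear recursion with Jordan estimates --- are routine.
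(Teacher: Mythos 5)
Your proof follows essentially the same route as the paper's: induction on the nilpotency class, writing $\phi^n(s)$ as a lift of $\phi_{\overline N}^n(\overline s)$ from $N/\gamma_c(N)$ times a central correction in $\gamma_c(N)$, controlling that correction via Karidi's ball estimates together with the bound $\rho(\phi|_{\gamma_c(N)})\le K^c$, and absorbing the resulting polynomial factor into the $n$-th root. The only divergences are cosmetic --- the paper invokes its isoperimetric lemma where you use the other half of Karidi's box estimate, asserts the $K^c$ spectral bound where you derive it from the free Lie algebra on $H_1(N,\bQ)$, and writes the recursion implicitly as a telescoping product where you unwind it explicitly --- so this is the same argument, carried out if anything a bit more carefully.
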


\begin{cor}
Let $\psi$ be a pseudo-Anosov homeomorphism with a non-orientable stable foliation.  Then $K_{\psi}$ cannot be detected from the induced entropy on nilpotent quotients of finite index subgroups of the fundamental group.
\end{cor}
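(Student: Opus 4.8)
The plan is to derive the corollary from Theorem~\ref{t:pos}, from McMullen's theorem above (the result of \cite{Mc2}), and from the elementary fact that word length, and hence entropy, does not increase under passage to quotients.

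First I would make the usual reductions. Suppose $\Gamma'\le\pi_1(\Sigma)$ is a finite index subgroup together with a surjective homomorphism $q\colon\Gamma'\to N$ onto a finitely generated nilpotent group to which $\psi$ descends. In particular $\Gamma'$ is invariant under $\psi_*$ (after possibly replacing $\psi$ by a power, which replaces $K_\psi$ by $K_\psi^{m}$ and the constant $\al$ below by $\al^{m}$ and so leaves the conclusion intact; for $\Gamma'$ characteristic no power is needed), so $\psi$ lifts to a homeomorphism $\widehat\psi$ of the corresponding finite cover $\Sigma'\to\Sigma$, which by \cite{Kob} is again pseudo-Anosov with dilatation $K_\psi$. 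Since the stable foliation of $\psi$ remains non-orientable on every finite cover (the non-homological case, cf.\ the discussion preceding McMullen's theorem), its pullback to $\Sigma'$ is non-orientable, so the first alternative in McMullen's theorem fails for the $\psi$-invariant cover $\Sigma'$ and we are in the second case. Thus there is a constant $0\le\al<1$, depending only on $\psi$ and not on $\Gamma'$, such that the spectral radius of $\widehat\psi_*$ on $H_1(\Sigma',\bR)$ is at most $\al\cdot K_\psi$.

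Now let $\phi\in\Aut(N)$ be the induced automorphism; its class in $\Out(N)$ is well defined, which is all that matters since inner automorphisms have trivial entropy and act trivially on homology. By Theorem~\ref{t:pos}, $K_\phi=K_{\overline\phi}$, where $\overline\phi$ is the automorphism of $H_1(N,\bZ)$ induced by $\phi$. I would then record two routine facts. First, for an automorphism of a finitely generated abelian group the word length entropy equals the spectral radius $\rho$ of the induced linear map on the group tensored with $\bR$: the torsion subgroup contributes a bounded error to word length, while on the free part the Jordan form gives $\ell(\overline\phi^{\,n}(s))\asymp n^{c}\cdot\rho^{n}$ for a suitable generator $s$, so the $n$-th roots tend to $\rho$ and can never exceed it. Second, $q$ induces a $\psi_*$-equivariant surjection $H_1(\Sigma',\bR)=H_1(\Gamma',\bZ)\otimes\bR\longrightarrow H_1(N,\bR)$, so the characteristic polynomial of $\overline\phi$ on $H_1(N,\bR)$ divides that of $\widehat\psi_*$ on $H_1(\Sigma',\bR)$, whence the spectral radius only drops. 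Combining these,
\[
K_\phi=K_{\overline\phi}=\rho\bigl(\overline\phi,\,H_1(N,\bR)\bigr)\le\rho\bigl(\widehat\psi_*,\,H_1(\Sigma',\bR)\bigr)\le\al\cdot K_\psi<K_\psi ,
\]
with $\al<1$ independent of $\Gamma'$, $N$ and $\phi$. Taking the supremum over all such data yields a uniform gap below $K_\psi$, which is exactly the assertion that $K_\psi$ is bounded away from — hence not detected by — the entropies it induces on nilpotent quotients of finite index subgroups of $\pi_1(\Sigma)$.

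The only point requiring care is bookkeeping rather than mathematics: pinning down precisely what it means for $\psi$ to \emph{descend} to a nilpotent quotient of a finite index subgroup (hence the passage to $\psi$-invariant or characteristic subgroups and, if necessary, to a power of $\psi$), and checking that the ambiguity in the choice of lift $\widehat\psi$ and of the induced automorphism $\phi$ is harmless — which it is, since all of these differ by inner automorphisms and neither $K$ nor the action on $H_1$ sees inner automorphisms. All the substantive content lies in Theorem~\ref{t:pos} and in McMullen's theorem.
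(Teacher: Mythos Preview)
Your proof is correct and follows exactly the approach the paper intends: the corollary is stated without explicit proof as an immediate consequence of Theorem~\ref{t:pos} together with McMullen's theorem, and you have simply spelled out the chain of inequalities --- entropy on $N$ equals spectral radius on $H_1(N,\bR)$ by Theorem~\ref{t:pos}, which is bounded by the spectral radius on $H_1(\Sigma',\bR)$ via the equivariant surjection, which in turn is at most $\al\cdot K_\psi$ by McMullen. Your bookkeeping remarks about characteristic subgroups, powers of $\psi$, and the irrelevance of inner automorphisms are appropriate and match the conventions the paper sets up in the introduction and abstract.
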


We say that an automorphism of a group $G$ is {\bf homologically trivial} if it induces the identity on $H_1(G,\bZ)$.

\begin{cor}\label{t:trivial}
Let $\phi$ be a homologically trivial automorphism of a finitely generated nilpotent group $N$.  Then $K_{\phi}=1$.
\end{cor}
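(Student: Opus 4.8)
\emph{Proof proposal.} The plan is to obtain Corollary~\ref{t:trivial} as a formal consequence of Theorem~\ref{t:pos}. First I would unwind the hypothesis: to say that $\phi$ is homologically trivial is exactly to say that the automorphism $\overline{\phi}$ of $H_1(N,\bZ)$ induced by $\phi$ is the identity. Since $N$ is finitely generated nilpotent, $H_1(N,\bZ)=N/[N,N]$ is a finitely generated abelian group, so $\overline{\phi}$ is an automorphism of a finitely generated group and its entropy $K_{\overline{\phi}}$ is defined; the hypotheses of Theorem~\ref{t:pos} then apply verbatim.

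Next I would record the elementary fact that the identity automorphism of any finitely generated group has trivial entropy: for a finite generating set $S$ and any $s\in S$ one has $\ell(\mathrm{id}^n(s))=\ell(s)$ for all $n$, so $\lim_{n\to\infty}\ell(s)^{1/n}=1$, and taking the maximum over the finite set $S$ preserves this. Hence $K_{\overline{\phi}}=1$. Theorem~\ref{t:pos} now yields $K_{\phi}=K_{\overline{\phi}}=1$, which is the claim. Note that no subtlety arises from possible torsion in $H_1(N,\bZ)$, since $\overline{\phi}$ is literally the identity map there.

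I do not expect any real obstacle: the substance of the corollary lives entirely in Theorem~\ref{t:pos}. For contrast, a self-contained argument would first observe that if $\phi$ is trivial on $N/[N,N]$ then an induction along the lower central series $\gamma_1\supseteq\gamma_2\supseteq\cdots$, using that each $\gamma_{i+1}/\gamma_{i+2}$ is generated by commutators $[a,b]\gamma_{i+2}$ depending bilinearly on $a\gamma_{i+1}\in\gamma_i/\gamma_{i+1}$ and $b\gamma_2\in N/\gamma_2$, shows that $\phi$ acts trivially on every graded piece $\gamma_i/\gamma_{i+1}$; one would then still have to prove that $\ell(\phi^n(s))$ grows only polynomially in $n$, which is the heart of Theorem~\ref{t:pos}. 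Invoking the theorem avoids redoing that estimate.
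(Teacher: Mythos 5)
Your proof is correct and is essentially the paper's argument: the paper deduces Corollaries~\ref{t:trivial} and~\ref{t:uni} together from Theorem~\ref{t:pos} by observing that unipotent automorphisms of abelian groups have spectral radius, hence entropy, equal to one, and your remark that the identity automorphism has trivial entropy is exactly the special case of that observation needed here.
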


More generally:

\begin{cor}\label{t:uni}
If $\phi\in\Aut(N)$ induces a unipotent automorphism of $H_1(N,\bQ)$ then $K_{\phi}=1$.
\end{cor}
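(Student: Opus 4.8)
The plan is to reduce at once to the abelian case using Theorem~\ref{t:pos}. That theorem gives $K_\phi = K_{\overline{\phi}}$, where $\overline{\phi}$ is the automorphism induced by $\phi$ on $H_1(N,\bZ)$, so it suffices to prove that an automorphism of a finitely generated abelian group which is unipotent on the rational homology has trivial entropy. In other words, everything nilpotent has already been absorbed into Theorem~\ref{t:pos}, and what remains is a statement about integer matrices.

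Concretely, I would write $H_1(N,\bZ) \cong \bZ^r \oplus T$, where $T$ is the (finite) torsion subgroup. Since $T$ is characteristic it is $\overline{\phi}$--invariant, so $\overline{\phi}$ descends to an automorphism of $\bZ^r = H_1(N,\bZ)/T$, represented by a matrix $A \in GL_r(\bZ)$. The hypothesis that $\overline{\phi}$ is unipotent on $H_1(N,\bQ)$ says exactly that $(A-I)^r = 0$, and hence that the entries of $A^n$ are polynomials in $n$ of degree at most $r-1$. In particular $\|A^n v\|$ grows at most polynomially in $n$ for every $v \in \bZ^r$.

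Next I would invoke the standard description of entropy on $\bZ^r$: for any finite generating set the word length $\ell$ is bi-Lipschitz equivalent to the $\ell^\infty$--norm, and passing to the torsion coordinate only costs a bounded error, so for each generator $s$ of $H_1(N,\bZ)$ one has $\ell(\overline{\phi}^{\,n}(s)) = \|A^n\pi(s)\| + O(1)$, where $\pi$ is the projection to $\bZ^r$. Combined with the previous paragraph this gives $\lim_{n\to\infty}\ell(\overline{\phi}^{\,n}(s))^{1/n} = 1$ for every generator $s$, so $K_{\overline{\phi}} = 1$, and therefore $K_\phi = 1$ by Theorem~\ref{t:pos}.

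There is no real obstacle here: the substance is carried entirely by Theorem~\ref{t:pos}, and the residue is the elementary fact that the entropy of a matrix in $GL_r(\bZ)$ acting on $\bZ^r$ is its spectral radius, which equals $1$ when the matrix is unipotent. The only point deserving a line of justification is that the finite torsion subgroup of $H_1(N,\bZ)$ contributes nothing to the word-length asymptotics, which is immediate from its finiteness. (Corollary~\ref{t:trivial} is the special case $A = I$, and the argument above applies verbatim with ``identity'' replaced by ``unipotent.'')
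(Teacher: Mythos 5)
Your proposal follows exactly the route the paper takes: apply Theorem~\ref{t:pos} to collapse the problem to the induced map on $H_1(N,\bZ)$, then observe that a unipotent integer matrix has spectral radius $1$ and hence trivial entropy. The paper states this in one sentence (``Corollaries~\ref{t:trivial} and~\ref{t:uni} follow immediately from the fact that unipotent automorphisms of torsion--free abelian groups have spectral radii equal to one, and from Theorem~\ref{t:pos}''); you simply unpack the abelian-group residue (torsion is finite hence $O(1)$, $(A-I)^r=0$ gives polynomial entries of $A^n$, word length is bi-Lipschitz to the $\ell^\infty$-norm), which is correct and is precisely the ``standard'' content the paper leaves implicit.
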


Thus, if an automorphism of a nilpotent group has trivial homological entropy, then this fact is reflected by an absence of entropy for the automorphism of the whole group.  We will in fact show that if $\phi$ is an automorphism of a nilpotent group $N$ which acts unipotently on the homology of $N$ then the word growth under iterations of $\phi$ is bounded by a polynomial.

\begin{thm}\label{t:unipoly}
Let $N$ be a finitely generated nilpotent group, let $g\in N$ and let $\phi$ be an automorphism of $N$ which induces a unipotent automorphism of $H_1(N,\bQ)$.  Then for each $g\in N$, \[\ell(\phi^n(g))=O(n^r)\] for some $r>1$ which is independent of $g$.
\end{thm}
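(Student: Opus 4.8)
The plan is to induct on the nilpotency class $c$ of $N$, at each stage peeling off the last nontrivial term $Z=\gamma_c(N)$ of the lower central series. I will use two standard facts: first, that in a finitely generated nilpotent group every term of the lower central series is polynomially distorted; and second, that a unipotent automorphism $U$ of $\bZ^d$ satisfies $U^n=\sum_{k=0}^{d-1}\binom{n}{k}(U-I)^k$, so that $\ell(U^nv)=O(n^{d-1})$ with exponent independent of $v$. The base case $c=1$ is then immediate: $N\cong\bZ^d\oplus T$ with $T$ finite, $\phi$ is unipotent on $H_1(N,\bQ)=\bQ^d$, and the displayed identity gives $\ell(\phi^n(g))=O(n^{\rk N-1})$ with exponent depending only on $N$.

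For the inductive step, let $Z=\gamma_c(N)$, a finitely generated central subgroup, and $\overline N=N/Z$, which is nilpotent of class at most $c-1$. Since $Z\le\gamma_2(N)$, the groups $N$ and $\overline N$ have the same abelianization and the induced automorphism $\overline\phi$ is again unipotent on $H_1$; the inductive hypothesis then supplies, for each $n$ and a fixed generating set $S$ of $N$, a word $u_n$ in $S$ of length $\ell_{\overline N}(\overline\phi^{\,n}(\overline g))=O(n^{r'})$ --- with $r'$ independent of $g$ --- whose image in $\overline N$ equals $\overline\phi^{\,n}(\overline g)$. The one place the homological hypothesis really enters is the claim that $\phi$ restricted to $Z$ is unipotent on $Z\otimes\bQ$: in the rational associated graded Lie ring of $N$, which is generated in degree one by $H_1(N,\bQ)$ because $[\gamma_i,\gamma_1]=\gamma_{i+1}$, the degree-$c$ part $\gamma_c(N)\otimes\bQ$ is spanned by length-$c$ bracket monomials in $H_1(N,\bQ)$, so every eigenvalue of $\phi$ there is a product of $c$ eigenvalues of $\phi$ on $H_1(N,\bQ)$, hence equals $1$. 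Consequently, by the same binomial estimate as in the base case, $\ell_Z(\phi^m(z))\le C(1+m)^e\ell_Z(z)+C$ with $e=\rk Z-1$.

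Now set $z_n:=u_n^{-1}\phi^n(g)$, which lies in $Z$ because $u_n$ and $\phi^n(g)$ have the same image in $\overline N$. The identity $\phi^n(g)=\phi(u_{n-1})\,\phi(z_{n-1})$ produces the recursion $z_n=v_n\,\phi(z_{n-1})$ with $v_n:=u_n^{-1}\phi(u_{n-1})\in Z$, and since $Z$ is abelian and $\phi$-invariant this unwinds to $z_n=\sum_{j=0}^{n-1}\phi^j(v_{n-j})+\phi^n(z_0)$. The element $v_n$ is represented by a word in $S$ of length $O(n^{r'})$ but lies in $\gamma_c(N)$, so polynomial distortion forces $\ell_Z(v_n)=O(n^{p})$ for some $p$ depending only on $r'$ and the distortion degree of $\gamma_c(N)$ in $N$; summing against the polynomial bound for the iterates of $\phi|_Z$ gives $\ell_Z(z_n)=O(n^{e+p+1})$, whence $\ell(\phi^n(g))\le\ell_N(u_n)+\ell_N(z_n)=O(n^{e+p+1})$. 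The exponent $r:=e+p+1$, enlarged past $1$ if necessary, depends only on $N$ and $\phi$, completing the induction.

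The two steps I expect to require care are the unipotence of $\phi$ on $\gamma_c(N)$ --- which rests on making precise that the rational graded Lie ring is generated in degree one, so that a graded automorphism acts on the degree-$c$ part with eigenvalues that are $c$-fold products of its degree-one eigenvalues --- and the bookkeeping in the recursion: the unipotence of $\phi|_Z$ is essential, since otherwise the sum $\sum_j\phi^j(v_{n-j})$ could grow exponentially in $n$, so combining the distortion estimate for $\ell_Z(v_n)$ with the polynomial control on $\phi^j|_Z$ is the technical heart of the argument. As an alternative one could pass to the real Mal'cev completion of $N$, on which $\phi$ becomes the exponential of a nilpotent derivation, so that the Mal'cev coordinates of $\phi^n(g)$ are visibly polynomial in $n$, and then invoke the quasi-isometry between the word metric and the weighted coordinate norm; the inductive argument above seems more elementary and self-contained.
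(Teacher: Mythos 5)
Your argument is correct and takes a genuinely different route from the paper. The paper first treats the \emph{homologically trivial} case (acting as the identity on $H_1(N,\bZ)$) directly, by using the lemma that such a $\phi$ acts trivially on each $\gamma_i(N)/\gamma_{i+1}(N)$; writing $\phi(g)=g\,c_2$, $\phi(c_i)=c_i c_{i+1}$ with $c_i\in\gamma_i(N)$, it counts occurrences of the $c_i$ under iteration to get a bound of order $n^{k-1}$. It then handles the general unipotent case by forming the mapping torus $N'=N\rtimes_\phi\bZ$, observing that unipotence forces $N'$ to be nilpotent, that conjugation by the stable letter extends $\phi$ and is homologically trivial on $N'$ (indeed it gives at worst linear growth there, since $\phi^n(g)=t^n g t^{-n}$), and finally invoking Osin's polynomial distortion of $N$ in $N'$ to transport the bound back. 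You instead bypass the semidirect product entirely: you induct on nilpotency class, you propagate unipotence from $H_1(N,\bQ)$ to every $\gamma_i(N)/\gamma_{i+1}(N)\otimes\bQ$ by the observation that the rational associated graded Lie ring is generated in degree one (so eigenvalues in degree $c$ are $c$-fold products of degree-one eigenvalues), and you control the error term $z_n=u_n^{-1}\phi^n(g)\in\gamma_c(N)$ via the cocycle recursion $z_n=v_n\,\phi(z_{n-1})$ together with the binomial estimate for iterates of a unipotent integer matrix and polynomial distortion of $\gamma_c(N)$ in $N$. Both proofs lean on polynomial distortion at the crucial step; what your version buys is that it makes the role of unipotence (rather than triviality) on the graded pieces explicit and keeps the bookkeeping entirely inside $N$, at the cost of a somewhat longer recursion argument, while the paper's mapping-torus trick reduces the general case to the structurally simpler homologically trivial one in a single stroke. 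The one point to state cleanly, which you flag yourself, is that the eigenvalue argument on the graded Lie ring should be run over $\bC$ (or an algebraic closure) via the $\phi$-equivariant surjections from tensor powers of $H_1(N,\bC)$ onto the graded pieces; once that is said, the proof is complete.
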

It does not seem possible to determine $r$ simply from the data of the action of $\phi$ on $H_1(N,\bQ)$.

Notice that Theorem \ref{t:unipoly} is stronger than and implies both Corollaries \ref{t:trivial} and \ref{t:uni}.

An alternative perspective on Theorem \ref{t:pos} can be formulated using the (incorrect) notion that if an automorphism $\phi$ of a residually nilpotent group $G$ has nontrivial entropy, then one should be able to observe nontrivial entropy for the action of $\phi$ on some nilpotent quotient of $G$.  To make the notion precise, write the lower central series of $G$ by $\{\gamma_i(G)\}$, so that $\gamma_1(G)=G$ and $\gamma_i(G)=[\gamma_{i-1}(G),G]$.  We call the quotient $G/\gamma_i(G)$ the {\bf $i^{th}$ universal nilpotent quotient of $G$}.  We see that \[K_{\phi}=\left (\lim_{n\to\infty}\lim_{i\to\infty}\max_{s\in S}\ell(\phi_i^n (s_i))\right )^{1/n},\] where $S$ is a finite generating set for $G$, $s_i$ is the image of $s$ in $G/\gamma_i(G)$, and $\phi_i$ is the automorphism of $G/\gamma_i(G)$ induced by $\phi$.  One might suppose that the two limits which express $K_{\phi}$ should commute with each other.  Theorem \ref{t:pos} shows that in general, they do not.

\section{Acknowledgements}
The author thanks J. Ellenberg, B. Farb, C. McMullen and A. Suciu for useful comments and discussions, and he thanks the anonymous referee for many useful comments, corrections and suggestions.

\section{Preliminary remarks and results}
We first need to collect some useful observations that will assist in later proofs and in motivation.  As a matter of notation and terminology, we will say that a sequence of positive real numbers $\{c_n\}$ grows like $K^n$ for $K>0$ and write $c_n\sim K^n$ if there exists a $C\geq 1$ such that for every $\eps>0$ there exists an $N$ such that for all $n\geq N$, \[\frac{1}{C}(K-\eps)^n\leq c_n\leq C(K+\eps)^n.\]  We will use the symbols $\succeq$ and $\preceq$ if only one of the inequalities holds.  We use the ``big $O$" and ``little $o$" notations in the standard way: $f(n)=O(g(n))$ if there is a $C>0$ such that $f(n)\leq C\cdot g(n)$ and $f(n)=o(g(n))$ if \[\frac{f(n)}{g(n)}\to 0\] as $n$ tends to infinity.

A standing assumption throughout this paper is that all nilpotent groups in question are torsion-free.  This is a reasonable assumption for the following two reasons:

\begin{prop}\label{p:vtf}
Let $N$ be a finitely generated nilpotent group.  Then $N$ is virtually torsion-free.
\end{prop}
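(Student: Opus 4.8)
\emph{Proof strategy.} The plan is to produce a torsion-free subgroup of finite index in $N$, powered by two classical facts about a finitely generated nilpotent group: (i) the set $T$ of all elements of finite order is a finite characteristic subgroup, and (ii) the group is residually finite. Granting these, the argument runs as follows. Since $T\setminus\{1\}$ is finite, for each nontrivial $t\in T$ we may use residual finiteness to choose a finite-index normal subgroup $M_t$ of $N$ with $t\notin M_t$; then $M:=\bigcap_{t\in T\setminus\{1\}}M_t$ is a finite-index normal subgroup of $N$ with $M\cap T=\{1\}$. If $m\in M$ has finite order then $m\in T$, forcing $m=1$, so $M$ is torsion-free, which completes the proof.

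It remains to recall (i) and (ii), both standard. For (i): in any nilpotent group the elements of finite order form a subgroup, which one proves by induction on the nilpotency class, passing from $N$ to $N/Z(N)$. The one subtlety is that an element of $N$ whose image in $N/Z(N)$ has finite order need not itself have finite order, so some extra bookkeeping is required --- for instance Schur's theorem, or the observation that a finitely generated nilpotent group generated by torsion elements is finite. Since subgroups of finitely generated nilpotent groups are again finitely generated, $T$ is then a finitely generated nilpotent torsion group, hence finite, again by an easy induction on class. Being defined by a group-theoretic condition, $T$ is characteristic. For (ii): residual finiteness of finitely generated nilpotent groups is a classical theorem of Hirsch; such groups are in fact residually finite-$p$ for every prime $p$.

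I expect the only genuine content to be the finiteness of the torsion subgroup $T$; everything else is assembly. There is an alternative route that sidesteps $T$ entirely: a finitely generated nilpotent group is linear (being polycyclic, it embeds in $GL_n(\bZ)$), so Selberg's lemma applies directly and yields a torsion-free finite-index subgroup --- indeed one can take the intersection of $N$ with a principal congruence subgroup, which is torsion-free by Minkowski's lemma. Since establishing linearity requires comparable work, however, the torsion-subgroup argument seems preferable if one wishes to keep the proof self-contained.
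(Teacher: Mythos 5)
Your proof is correct, and your primary argument takes a genuinely different route from the paper's. The paper's proof is a one-liner: finitely generated (torsion-free) nilpotent groups are linear, and Selberg's Lemma does the rest --- which is precisely the alternative you sketch at the end. Your main argument instead combines two other classical facts: the torsion elements of a finitely generated nilpotent group form a finite characteristic subgroup $T$, and such a group is residually finite (Hirsch), so intersecting finitely many finite-index normal subgroups missing the elements of $T\setminus\{1\}$ produces a torsion-free finite-index subgroup. The two routes sit at comparable depth --- Hirsch's residual finiteness and the Auslander--Swan embedding of polycyclic groups into $GL_n(\bZ)$ are both standard structure theory --- but yours is somewhat more self-contained and avoids any appeal to linear groups or Selberg, while the paper's is shorter granted the black boxes. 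One small slip in an aside: a general finitely generated nilpotent group is \emph{not} residually finite-$p$ for every prime $p$; that refinement (Gruenberg) is for the torsion-free ones, and, e.g., $\bZ/3\bZ$ is not residually finite-$2$. Since you only invoke residual finiteness itself, nothing in the argument depends on this. It is also worth noting that the paper's phrase ``linearity of torsion-free nilpotent groups'' is slightly loose, since one wants $N$ itself to be linear before applying Selberg; your parenthetical (polycyclic, hence embeds in $GL_n(\bZ)$) fills that gap correctly.
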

\begin{proof}
This follows from the linearity of torsion-free nilpotent groups and Selberg's Lemma (see \cite{R}).
\end{proof}

\begin{prop}\label{p:fi}
Let $G$ be a finitely generated group, $\phi\in\Aut(G)$ and $G'<G$ a finite index $\phi$--invariant subgroup.  The entropy of $\phi$ as an automorphism of $G$ coincides with its entropy as an automorphism of $G'$.
\end{prop}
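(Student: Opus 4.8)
The plan is to realize both entropies with well-chosen generating sets --- exploiting that a finite-index subgroup is undistorted --- and then to track the $\phi$-orbit of a generator of $G$ by writing each $\phi^n(s)$ as a product of an element of $G'$ and an element of a fixed finite transversal, the first factor being controlled by the restricted automorphism and the second being bounded.

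First I would fix a finite symmetric generating set $S$ for $G$, a right transversal $T$ for $G'$ in $G$ with $1\in T$, and set $C'=\max_{t\in T}\ell_S(t)$. For $g\in G$ let $\bar g\in T$ denote the representative of the coset $G'g$ and put $\sigma(g)=g\bar g^{-1}\in G'$; note the cocycle identity $\sigma(xg)=x\,\sigma(g)$ for $x\in G'$. The Reidemeister--Schreier telescoping $g=(\tau_0 s_1\tau_1^{-1})(\tau_1 s_2\tau_2^{-1})\cdots(\tau_{k-1}s_k\tau_k^{-1})$, valid whenever $g=s_1\cdots s_k\in G'$ with $s_i\in S$ and $\tau_i=\overline{s_1\cdots s_i}$ (so $\tau_0=\tau_k=1$), shows that $S':=\{h\in G':\ell_S(h)\le 2C'+1\}$ is a finite symmetric generating set for $G'$ satisfying $\ell_{S'}(h)\le\ell_S(h)\le(2C'+1)\,\ell_{S'}(h)$ for all $h\in G'$. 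Since entropy does not depend on the generating set, I compute $K_{\phi|_{G'}}$ with $S'$. Set $M_n=\max_{s\in S}\ell_S(\phi^n(s))$ and $M'_n=\max_{s'\in S'}\ell_{S'}(\phi^n(s'))$; both are submultiplicative in $n$, so $K_\phi=\lim_n M_n^{1/n}$ and $K_{\phi|_{G'}}=\lim_n(M'_n)^{1/n}$. The inequality $K_{\phi|_{G'}}\le K_\phi$ is then immediate: each $s'\in S'$ is a word of length at most $2C'+1$ in $S$, hence $\ell_{S'}(\phi^n(s'))\le\ell_S(\phi^n(s'))\le(2C'+1)\,M_n$, so $M'_n\le(2C'+1)M_n$.

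For the reverse inequality, fix $s\in S$ and put $h_n=\sigma(\phi^n(s))\in G'$ and $\tau_n=\overline{\phi^n(s)}\in T$, so $\phi^n(s)=h_n\tau_n$. Applying $\phi$ and using $\phi(G')=G'$ together with the cocycle identity, one computes from $\phi^{n+1}(s)=\phi(h_n)\phi(\tau_n)$ the recursion $h_{n+1}=\phi(h_n)\,\sigma(\phi(\tau_n))$, where $\sigma(\phi(\tau_n))$ ranges over the fixed finite set $V=\{\sigma(\phi(t)):t\in T\}\subset G'$; unrolling gives $h_n=\phi^n(h_0)\,\prod_{k=1}^n\phi^{n-k}(v_k)$ with $h_0=\sigma(s)$ and each $v_k\in V$. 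Each element of the finite set $\{\sigma(s):s\in S\}\cup V$ has $\ell_{S'}$-length at most some $P$, and $\phi^j$ carries a word of at most $P$ letters in $S'$ to an element of $\ell_{S'}$-length at most $P\,M'_j$; hence $\ell_{S'}(h_n)\le P\sum_{j=0}^n M'_j$. Because $K_{\phi|_{G'}}\ge 1$, for each $\eps>0$ there is a constant with $M'_j\le\text{const}\cdot(K_{\phi|_{G'}}+\eps)^j$ for all $j$, and since the common ratio $K_{\phi|_{G'}}+\eps$ exceeds $1$ the partial sum $\sum_{j=0}^n M'_j$ is again at most a constant times $(K_{\phi|_{G'}}+\eps)^n$. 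Therefore $\ell_S(\phi^n(s))\le\ell_S(h_n)+\ell_S(\tau_n)\le(2C'+1)\,\ell_{S'}(h_n)+C'$ grows at most like $(K_{\phi|_{G'}}+\eps)^n$, so $\limsup_n\ell_S(\phi^n(s))^{1/n}\le K_{\phi|_{G'}}+\eps$; letting $\eps\to 0$ and maximizing over $s\in S$ yields $K_\phi\le K_{\phi|_{G'}}$, which together with the previous paragraph gives equality.

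I expect the main obstacle to be the reverse inequality, and within it the identification of the recursion $h_{n+1}=\phi(h_n)\,\sigma(\phi(\tau_n))$ governing the $G'$-component of the orbit, followed by the verification that replacing $M'_j$ by the partial sums $\sum_{j\le n}M'_j$ does not change the exponential rate --- which is exactly what $K_{\phi|_{G'}}\ge 1$ guarantees, the sum then being comparable to its largest term up to a polynomial factor. The remaining ingredients are standard: finite-index subgroups are undistorted (the Reidemeister--Schreier estimate above), and the entropy of a group automorphism is independent of the chosen finite generating set.
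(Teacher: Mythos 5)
Your proof is correct, and it runs on the same engine as the paper's: a cocycle/telescoping decomposition that tracks the $G'$-component of the $\phi$-orbit, plus the observation that $\sum_{j\le n}M'_j$ has the same exponential growth rate as $M'_n$ because that rate is $\ge 1$. The differences are mostly in execution, and they are improvements. The paper first passes to a power of $\phi$ so that each coset of $G'$ is preserved setwise, and then argues that ``all cosets have the same growth rate'' via the map $\Phi(h)=t\phi(h)$; making that fully rigorous requires a small further argument that $K_{\phi^k}=K_\phi^k$ and a careful interpretation of what ``entropy on a coset'' means, since $\Phi$ is not the restriction of $\phi$. You sidestep both issues by tracking the transversal representative $\tau_n$ directly, which gives a clean recursion $h_{n+1}=\phi(h_n)\,\sigma(\phi(\tau_n))$ and makes the cocycle formula an immediate unrolling rather than an ad hoc identity. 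You also make the undistortedness of $G'$ explicit via the Reidemeister--Schreier rewriting (the paper simply invokes the inclusion being a quasi-isometry), and you correctly flag that the only nontrivial analytic point is that replacing $M'_n$ by its partial sums does not raise the exponential rate when the base is at least $1$. So: same core idea, but your write-up fills in the steps the paper leaves implicit and avoids the power-passing detour.
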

\begin{proof}
In one direction, the inclusion map $G'\to G$ is a quasi-isometry.  If $g'\in G'$ and $\ell(\phi^n(g'))\sim K^n$ then $\ell(\phi^n(g'))\sim K^n$ within $G$.  It follows that if the entropy of $\phi$ on $G'$ is $K$ then it is at least $K$ on $G$.

Conversely, let $g\in G$ be such that $\ell(\phi^n(g))\sim K^n$.  Since $G'$ has finite index in $G$, there is a finite collection of coset representatives $\{t_1,\ldots,t_n\}$ for $G'$ in $G$.  By passing to a finite power of $\phi$, we may assume that $\phi$ preserves the cosets setwise.

If $g\in G'$ satisfies $\ell(\phi^n(g))\sim K^n$ then there is nothing to show.  We will show that all the cosets have the same growth rate.  Indeed, we claim that if $G$ is a group, $\phi\in \Aut(G)$, $H, tH\subset G$ are $\phi$--invariant cosets, then the word growth entropy of $\phi$ on $H$ and $tH$ is equal.

Suppose $\phi$ has entropy $K$ when restricted to a coset $H$, and let $\Phi(h)=t\phi(h)$.  Note that $\Phi$ satisfies a cocycle--like identity: \[\Phi^n(h)=t\phi(t)\cdots\phi^{n-1}(t)\phi^n(h).\]  We have the estimate \[\ell(\Phi^n(h))\sim (1+K+\cdots+K^n)\sim K^n.\]  Thus, $\Phi$ has entropy $K$ as well.  The claim follows, hence the proposition.
\end{proof}

When $G'<G$ is not of finite index, all bets are off.  For example, there are automorphisms of the Heisenberg group with nontrivial entropy which act trivially on the center.  It is also easy to produce a two--step nilpotent group $N$ with an automorphism $\phi$ which has spectral radius $K>1$ on $N^{ab}$ and spectral radius greater than $K$ on $Z(N)$.

One might be inclined to conjecture that Proposition \ref{p:fi} holds for quasi-isometrically embedded subgroups which are $\phi$--invariant, but this is not the case -- consider for instance an automorphism $\phi$ of $G$ and view it as an automorphism of $G\times\bZ$.  We extend $\phi$ by declaring it to act trivially on the cyclic direct factor.  The cyclic factor is quasi--isometrically embedded but $\phi$ acts trivially on it, no matter how $\phi$ acts on $G$.

The other fundamental tool that we shall use in this paper is the geometry of balls inside of nilpotent groups.  Recall that nilpotent groups have polynomial distortion of subgroups.  What this means is that for any group $N$, the quotients $\gamma_i(N)/\gamma_{i+1}(N)$ are not quasi--isometrically embedded in $N/\gamma_{i+1}(N)$, but rather have polynomial distortion whose degree depends on $i$.  This phenomenon follows fundamentally from certain commutator identities which hold in nilpotent groups (see \cite{MKS}).  One precise formulation was proved by Osin in \cite{O}:

\begin{lemma}\label{l:osin}
Let $N\to N'$ be an inclusion of finitely generated nilpotent groups, with length functions $\ell_N$ and $\ell_N'$, respectively.  Then there is a polynomial $P$ depending on $N$ and $N'$ such that if $n\in N$ satisfies $\ell_{N'}(n)=k$, then $\ell_N(n)\leq P(k)$.  In particular inclusions of nilpotent groups have polynomial word distortion.
\end{lemma}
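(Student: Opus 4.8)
The plan is to argue via Mal'cev coordinates, reducing everything to the one genuinely nilpotent-specific input, namely that the normal-form coordinates of an element of a finitely generated nilpotent group grow at most polynomially in its word length.

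First I would dispose of torsion. By Propositions~\ref{p:vtf} and~\ref{p:fi} we may replace $N'$ by a torsion-free finite-index subgroup $N_0'$ and $N$ by $N\cap N_0'$: the relevant length functions then change only by a quasi-isometry, and, arguing as in the proof of Proposition~\ref{p:fi}, a polynomial distortion estimate for $N\cap N_0'\hookrightarrow N_0'$ upgrades to one for $N\hookrightarrow N'$. So assume $N$ and $N'$ are torsion-free, with $N'$ of nilpotency class $c$. Fix a Mal'cev basis $u_1,\dots,u_n\in N'$ adapted to the lower central series of $N'$, with weights $w_1,\dots,w_n$ (so $w_j=k$ when $u_j\in\gamma_k(N')\setminus\gamma_{k+1}(N')$); every $g\in N'$ then has a unique normal form $g=u_1^{\xi_1(g)}\cdots u_n^{\xi_n(g)}$ with $\xi_j(g)\in\bZ$. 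Fix likewise a Mal'cev basis $v_1,\dots,v_m\in N$ of $N$, with coordinate functions $\eta_1(g),\dots,\eta_m(g)$.

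I would then invoke three facts. (A) Since $\ell_N(v_i^{a})\le|a|\,\ell_N(v_i)$, writing $g\in N$ in normal form gives the cheap upper bound $\ell_N(g)\le C_1\sum_i|\eta_i(g)|$. (B) Conversely, $|\xi_j(g)|\le C_2\,\ell_{N'}(g)^{w_j}$ for every nontrivial $g\in N'$; this is the statement that balls in $N'$ have polynomially bounded coordinates --- equivalently, that $N'$ has polynomial growth --- and follows from the P.~Hall collection process (the exponent of a weight-$w$ basis element in the normal form of a product of $k$ generators is $O(k^{w})$); see \cite{MKS}, \cite{Ka}. (C) Because the Mal'cev completions $N\otimes\bR\subseteq N'\otimes\bR$ are simply connected nilpotent Lie groups, in which exponential coordinates --- hence the transition between any two Mal'cev coordinate systems --- are polynomial, and $N\otimes\bR$ is a closed subgroup of $N'\otimes\bR$, there are polynomials $Q_1,\dots,Q_m$ with $\eta_i(g)=Q_i(\xi_1(g),\dots,\xi_n(g))$ for all $g\in N$.

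Granting (A)--(C), the estimate is immediate: for $g\in N$,
\[
\ell_N(g)\;\le\;C_1\sum_i|Q_i(\xi(g))|\;\le\;C_1\sum_i C_i'\Big(1+\sum_j|\xi_j(g)|\Big)^{\deg Q_i}\;\le\;C_3\big(1+\ell_{N'}(g)\big)^{D},
\]
with $D=c\cdot\max_i\deg Q_i$, using (B) at the last step; hence $P(k)=C_3(1+k)^{D}$ works. I expect the main obstacle to be fact (B): it is the only place the polynomial nature of nilpotent groups is genuinely used, and it requires the collection calculus, whereas (A) and (C) are soft. An alternative that avoids Lie-theoretic language is to run the collection process directly inside $N'$ so as to rewrite $g\in N$ as a word of controlled complexity in the Mal'cev basis of $N$; this is essentially the route of \cite{O}. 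I would present the coordinate version above because it cleanly isolates the single hard ingredient.
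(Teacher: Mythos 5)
The paper does not actually prove this lemma; it is stated verbatim as Osin's theorem with a citation to \cite{O}, and the subsequent corollary is the only thing proved. So you have supplied a proof where the paper supplies none, and the question of ``same route'' versus ``different route'' is really a comparison with Osin's argument rather than with anything in the text.

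Your proof is correct, and the route --- passing to Mal'cev completions and reducing everything to (A) a cheap word-length upper bound from normal forms, (B) polynomial bounds on Mal'cev coordinates in terms of word length (which is exactly the ``ball $\subset$ box'' half of Karidi's Theorem~\ref{t:karidi}, restated in coordinates), and (C) polynomiality of the coordinate change $\xi\mapsto\eta$ --- is a clean Lie-theoretic alternative to Osin's combinatorial collection-process argument. The exponent $D=c\cdot\max_i\deg Q_i$ and the final chain of inequalities check out, and the reduction to the torsion-free case via Propositions~\ref{p:vtf} and~\ref{p:fi} is fine (finite-index inclusions are quasi-isometries in both $N$ and $N'$, so polynomial distortion passes back up). The one place where you are terser than you should be is fact (C): ``transitions between Mal'cev coordinate systems are polynomial'' is a statement about one fixed group, whereas what you need is that the map taking $\xi(g)$ (coordinates of $g\in N$ in the ambient basis of $N'$) to $\eta(g)$ (coordinates in the intrinsic basis of $N$) is polynomial. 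This is true, but it uses the subgroup structure: one composes $\xi^{-1}$, then $\log$ into $\mathfrak{n}'$ (polynomial by BCH), then a linear projection $\mathfrak{n}'\to\mathfrak{n}$ which is the identity on $\mathfrak{n}$, then $\exp$ and the Mal'cev chart of $N\otimes\bR$ (again polynomial). Your clause ``and $N\otimes\bR$ is a closed subgroup of $N'\otimes\bR$'' signals you know this is needed, but the sentence as written doesn't quite produce the $Q_i$; spelling out the $\log$--project--$\exp$ composite would close that gap. With that expansion, the argument is complete and gives the lemma with an explicit exponent, which is a pleasant bonus over a bare citation.
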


Osin's result has the following easy but useful corollary:

\begin{cor}
Let $N$ be a finitely generated nilpotent group, let $\phi$ be an automorphism of $N$, and let $N'$ be a $\phi$--invariant subgroup of $N$.  Suppose that the entropy of $\phi$ as an automorphism of $N'$ is $K'$.  Then $K_{\phi}\geq (K')^{1/k}$, where $k$ is a positive integer which depends only on the inclusion $N'\to N$.  In particular $K_{\phi}=1$ implies $K'=1$.
\end{cor}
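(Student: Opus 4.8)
The plan is to deduce this directly from Osin's Lemma (Lemma~\ref{l:osin}), together with the elementary fact that $K_\phi$ dominates the exponential growth rate of $\ell_N(\phi^n(g))$ for \emph{every} element $g\in N$, not merely for the generators in $S$. The mechanism is that Osin's polynomial distortion converts an exponential lower bound for word length in $N'$ into an exponential lower bound for word length in $N$, with the exponent divided by $\deg P$. To set this up, first unwind Lemma~\ref{l:osin}: there is a polynomial $P$, depending only on the inclusion $N'\hookrightarrow N$, with $\ell_{N'}(x)\leq P(\ell_N(x))$ for all $x\in N'$. Put $k=\deg P$. Since $P$ has degree $k$, one has $P(t)\leq C\,t^{k}$ for all real $t\geq 1$, where $C\geq 1$ depends only on $P$, hence only on the inclusion. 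Consequently, for every nontrivial $x\in N'$ we have $\ell_N(x)\geq 1$ and therefore
\[
\ell_N(x)\ \geq\ \bigl(\ell_{N'}(x)/C\bigr)^{1/k}.
\]
Note that $k$ and $C$ are independent of $\phi$, as the corollary requires.

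Next I would pick a generator witnessing $K'$. The generating set $S'$ of $N'$ is finite and, by the definition of entropy recalled in the introduction, each limit $\lim_{n}\ell_{N'}(\phi^{n}(t))^{1/n}$ exists; hence there is a fixed $s\in S'$ with $\lim_{n}\ell_{N'}(\phi^{n}(s))^{1/n}=K'$. Since $\phi$ is an automorphism and $s\neq 1$, every iterate $\phi^{n}(s)$ is nontrivial, so the displayed inequality applies with $x=\phi^{n}(s)$. Taking $n$-th roots and letting $n\to\infty$, the factor $C^{-1/(kn)}$ tends to $1$ and $\ell_{N'}(\phi^{n}(s))^{1/(kn)}\to (K')^{1/k}$, so
\[
\liminf_{n\to\infty}\ell_N(\phi^{n}(s))^{1/n}\ \geq\ (K')^{1/k}.
\]

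To finish, I would invoke the standard upper bound $\limsup_{n}\ell_N(\phi^{n}(g))^{1/n}\leq K_\phi$ valid for every $g\in N$: writing $g$ as a word of length $m$ in $S\cup S^{-1}$ gives $\ell_N(\phi^{n}(g))\leq m\cdot\max_{t\in S}\ell_N(\phi^{n}(t))$, and the quantity $\max_{t\in S}\ell_N(\phi^{n}(t))$ has exponential growth rate exactly $K_\phi$ by submultiplicativity. Applying this with $g=s$ and chaining with the previous paragraph yields $(K')^{1/k}\leq \liminf_{n}\ell_N(\phi^{n}(s))^{1/n}\leq \limsup_{n}\ell_N(\phi^{n}(s))^{1/n}\leq K_\phi$, which is the desired inequality $K_\phi\geq (K')^{1/k}$. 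The final assertion is then immediate: $K'\geq 1$ always, so if $K_\phi=1$ then $(K')^{1/k}\leq 1$ forces $K'=1$.

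I do not expect any essential obstacle here — this is precisely why the statement is recorded as a corollary of Osin's Lemma rather than proved from scratch. The only points that need a little care are bookkeeping: verifying that the polynomial $P$, and hence $k=\deg P$ and $C$, depend only on the inclusion $N'\hookrightarrow N$ and not on the automorphism $\phi$; the passage from Osin's raw distortion estimate to the clean inequality $\ell_N(x)\geq(\ell_{N'}(x)/C)^{1/k}$ for nontrivial $x$ (using that an automorphism sends nontrivial elements to nontrivial elements, so one never needs the bound near $t=0$); and the routine check that $\max_{t\in S}\ell_N(\phi^{n}(t))$ grows like $K_\phi^{\,n}$, which follows from submultiplicativity as noted in the introduction. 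Phrasing everything through $\liminf$ and $\limsup$, as above, sidesteps any need to know that $\lim_{n}\ell_N(\phi^{n}(s))^{1/n}$ exists for the particular element $s$.
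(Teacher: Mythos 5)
Your proof is correct and follows essentially the same route as the paper: pick a generator of $N'$ witnessing the entropy $K'$, then use Osin's polynomial distortion bound to convert the exponential lower bound $\ell_{N'}(\phi^n(s))\succeq (K')^n$ into $\ell_N(\phi^n(s))\succeq (K')^{n/k}$, giving $K_\phi\geq (K')^{1/k}$. The paper's version is a two-line sketch and yours simply fills in the bookkeeping (the $\liminf$/$\limsup$ framing, the explicit inversion of the polynomial bound, the reduction to the maximizing generator), but there is no difference in the underlying idea.
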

\begin{proof}
Let $g\in N'$ be such that $\ell_{N'}(\phi^n(g))\sim (K')^n$.  Since $N'$ is polynomially distorted in $N$, there is a $k$ such that $\ell_N(\phi^n(g))\succeq(K')^{n/k}$, so that $K_{\phi}\geq (K')^{1/k}$.
\end{proof}

In addition to Osin's result, we will apply the following theorem of Karidi (see \cite{Ka}), which determines the approximate distortion of balls inside of nilpotent groups:

\begin{thm}\label{t:karidi}
Let $G$ be a simply connected finite dimensional real Lie group equipped with a left--invariant Riemannian metric.  Let $G=\gamma_1(G)\supset \cdots \supset \gamma_{c+1}(G)=1$ be the lower central series of $G$. The quotients $\gamma_i/\gamma_{i+1}$ are vector groups of dimensions, say, $d_i$. Then there exist coordinates in $\bR^n$ and a constant $a>1$ such that, for every $r>1$, the ball of radius $r$ (around the origin, i.e., $1_G$) is contained in the (Euclidean) box with sides parallel to the coordinate axes and sizes \[ar,\cdots,ar,\ (ar)^2,\cdots, (ar)^2,\ \cdots,\ (ar)^c,\cdots,(ar)^c\] (where the first group in this list consists of $d_1$ elements, the second $d_2$ elements, etc.), and another box with sizes \[r/a,\cdots,r/a,\ (r/a)^2,\cdots,(r/a)^2,\ \cdots,\ (r/a)^c,\cdots,(r/a)^c\] is contained in this ball.
\end{thm}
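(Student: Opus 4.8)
The plan is to work in Mal'cev coordinates of the second kind adapted to the lower central series, to reduce the two inclusions to a differential estimate and a path--construction estimate respectively, and to exploit the fact that left--invariant vector fields on a simply connected nilpotent group are \emph{polynomial}, with a grading dictated by the lower central series.

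First I would fix coordinates. Choose a basis $X_1,\dots,X_n$ of the Lie algebra $\mathfrak g$ of $G$ subordinate to the lower central series: the first $d_1$ vectors project to a basis of $\gamma_1/\gamma_2$, the next $d_2$ to a basis of $\gamma_2/\gamma_3$, and so on, and assign to the index $j$ the \emph{weight} $w(j)=i$ when $X_j$ sits in the $i$--th graded piece. Since $G$ is simply connected nilpotent, $g\mapsto(x_1,\dots,x_n)$ determined by $g=\exp(x_1X_1)\cdots\exp(x_nX_n)$ is a global diffeomorphism $G\cong\bR^n$; these are the coordinates of the theorem, and the boxes are axis--aligned with respect to them. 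The structural input I would use throughout is that, in these coordinates, each left--invariant vector field $X_k^L$ is a polynomial vector field whose $j$--th component is a polynomial in $x_1,\dots,x_{j-1}$ all of whose monomials have weighted degree exactly $w(j)-w(k)$; in particular that component is the zero polynomial when $w(k)>w(j)$ and a constant when $w(k)=w(j)$. This ``graded triangular'' shape is a direct consequence of the Baker--Campbell--Hausdorff formula together with nilpotency, and one can check it by hand on the Heisenberg group, where $X^{L}=\partial_x-y\,\partial_z$, $Y^{L}=\partial_y$, $Z^{L}=\partial_z$.

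For the upper inclusion (ball $\subseteq$ box), let $\gamma\colon[0,r]\to G$ be a minimizing unit--speed geodesic with $\gamma(0)=1$, written as $\dot\gamma(t)=\sum_k a_k(t)X_k^L(\gamma(t))$ with $\sum_k a_k(t)^2$ bounded by a constant depending only on the metric and the chosen frame. I would prove $|x_j(\gamma(t))|\le (at)^{w(j)}$ for all $t\le r$ by induction on $w(j)$. When $w(j)=1$ only the indices $k$ with $w(k)=1$ contribute and the relevant components are constants, so $|\dot x_j|\preceq 1$ and $|x_j(t)|\preceq t$. When $w(j)=m$, the contributing indices satisfy $w(k)\le m$: the terms with $w(k)=m$ are again controlled by a constant, while for $w(k)=m-l$ with $l\ge 1$ the $j$--th component of $X_k^L$ is a sum of monomials in coordinates of weight at most $m-1$ of total weight $l$, hence bounded by $C(at)^l$ by the inductive hypothesis. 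Integrating $|\dot x_j|\preceq 1+(at)^{m-1}$ (this is where $r>1$, hence $t\ge 1$, is used to compare $t$ with $t^{m-1}$) gives $|x_j(t)|\preceq (a't)^m$ after enlarging $a$; taking $t=r$ produces the outer box. For the lower inclusion (box $\subseteq$ ball), given $g$ with $|x_j(g)|\le(r/a)^{w(j)}$ I would join $1$ to $g$ by a path of length $\le r$ built only from segments of one--parameter subgroups $\exp(tX_j)$ with $w(j)=1$. Because $\gamma_2=[\gamma_1,\gamma_1]$ forces every graded piece to be spanned by iterated brackets of weight--one elements, a displacement of size $\rho$ in a weight--$m$ direction is realized by an $m$--fold commutator loop whose weight--one legs have length $\asymp\rho^{1/m}$, so of total length $\asymp m\rho^{1/m}\asymp m r/a$ when $\rho\le (r/a)^m$. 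Such a gadget also perturbs the coordinates of weight $>m$, but since its legs have length $\asymp r/a$, any weight--$l$ error term (a product of $l$ legs, via Baker--Campbell--Hausdorff) is of size $\preceq (r/a)^l$, so it still lies inside a box of the allowed shape. One therefore corrects the coordinates layer by layer, from weight $1$ up to weight $c$, each layer costing $O(r/a)$ in length and perturbing only deeper layers by controllable amounts; after $c$ stages one has reached $g$ along a path of length $\le Mr/a$ with $M=M(G)$ constant, and choosing $a\ge M$ (and at least as large as in the upper bound) finishes the proof.

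The genuine difficulty is the layer--by--layer correction in the lower bound: because $G$ need not be stratified, the anisotropic dilations $\delta_t(x_1,\dots,x_n)=(t^{w(1)}x_1,\dots,t^{w(n)}x_n)$ are \emph{not} Lie group automorphisms, so the commutator gadgets do not hit their target displacements exactly but only modulo deeper--coordinate errors, and one must check these errors never outgrow the boxes one is aiming into and can be absorbed at subsequent stages; the same non--homogeneity is what forces the slightly lossy estimates (and the hypothesis $r>1$) in the upper bound. An alternative organization that isolates this point is to prove the statement first for the Carnot--Carathéodory metric associated with the weight--one sub--bundle, where it is the classical ball--box theorem and the $\delta_t$ are well behaved, and then compare that metric with the given left--invariant Riemannian one at scales $r>1$; the content then migrates into the comparison $c^{-1}d_{cc}\le d\le d_{cc}$, whose nontrivial half is exactly the assertion that a Riemannian geodesic cannot penetrate into $\gamma_i$ faster than the $i$--th root of its length.
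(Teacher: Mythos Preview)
The paper does not prove this statement. Theorem~\ref{t:karidi} is quoted verbatim as a result of Karidi~\cite{Ka} and is used only as a black box (specifically, in the proof of Theorem~\ref{t:pos} to convert lengths in $\gamma_k(N)$ to lengths in $N$). There is therefore no argument in the paper to compare your proposal against.

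That said, your outline is a reasonable sketch of how such ball--box results are actually established. The upper inclusion via integrating the graded--triangular system satisfied by a unit--speed geodesic in Mal'cev coordinates, with induction on the weight, is the standard move; so is the lower inclusion via iterated commutator paths built from weight--one flows, correcting the coordinates layer by layer. You have also correctly isolated the genuine subtlety: in a non--stratified nilpotent group the anisotropic dilations $\delta_t$ are not automorphisms, so the commutator gadgets only achieve their target displacements modulo errors in deeper layers, and one must check that these errors stay inside the target boxes and that the total number of correction steps is bounded by a constant depending only on $G$. Nilpotency (finite depth $c$) is what makes this terminate. Your alternative organization---prove the statement first for the Carnot--Carath\'eodory metric associated to the weight--one distribution, where the dilations \emph{are} homotheties, and then compare that metric with the Riemannian one at scales $r>1$---is also legitimate and is close in spirit to how this circle of results is treated in the literature. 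If you intend to write out a full proof, the place needing the most care is precisely the error propagation in the lower bound; everything else is routine once the polynomial, weight--graded structure of the left--invariant fields has been recorded.
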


Recall that by a result of Mal'cev (see \cite{R}), to any finitely generated torsion--free nilpotent group $N$ one can associate a simply connected real nilpotent Lie group $N\otimes\bR$, called the Mal'cev completion of $N$.  The quotient $(N\otimes\bR)/N$ is compact, since it is an iterated torus bundle.  It follows that $N$ and $N\otimes\bR$ are quasi--isometric to each other.  It follows that Karidi's Theorem applies to finitely generated torsion--free nilpotent groups, with the Riemannian metric replaced by the word metric with respect to some generating set.  As observed in Proposition \ref{p:vtf}, any finitely generated nilpotent group has a torsion--free subgroup of finite index, so each finitely generated nilpotent group is quasi--isometric to some simply connected real nilpotent Lie group.

We will need to understand how much word length can grow under rearrangement of generators within a word.  We will need to develop some understanding in the following setup: let $N$ be an $m$--generated, $k$--step nilpotent group and $\gamma_k(N)$ be the last term in its lower central series.  Let $w\in F_m$ be an arbitrary reduced word in the free group on $m$ generators.  Suppose that the image of $w$ has length $\ell$ in $N$ and length $\ell'$ in $N/\gamma_k(N)$.

\begin{lemma}\label{l:isoperimetric}
The image $g$ of $w$ in $N$ can be written as a product $g'\cdot z_g$ where $z_g\in \gamma_k(N)$ and $g'$ has length $\ell'$.  Furthermore, we may arrange so that the length of $z_g$ in $\gamma_k(N)$ is $O(\ell^{k})$.
\end{lemma}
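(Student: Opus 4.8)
The plan is to obtain $g'$ by lifting a geodesic representative of $g$ from the quotient $N/\gamma_k(N)$, to let $z_g$ absorb the resulting error, and then to control the length of $z_g$ \emph{in $\gamma_k(N)$} by invoking polynomial distortion, sharpened so that the exponent is exactly $k$. First I would choose a word $v$ in the given generators, of length $\ell'$, whose image in $N/\gamma_k(N)$ equals the image $\bar g$ of $g$; such a $v$ exists by the hypothesis that $\bar g$ has length $\ell'$. Let $g'\in N$ be the image of $v$. Then $g'$ is represented by a word of length $\ell'$, and $g'$ and $g$ have the same image in $N/\gamma_k(N)$. Setting $z_g=(g')^{-1}g$ we get $z_g\in\gamma_k(N)$ and $g=g'z_g$, which is the asserted factorization; since $N$ is $k$--step nilpotent, $\gamma_k(N)\le Z(N)$, so the order of the two factors is immaterial.

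Next I would bound the length of $z_g$ in the word metric of $N$. Word length does not increase under the quotient map $N\to N/\gamma_k(N)$, so $\ell'=\ell_{N/\gamma_k(N)}(\bar g)\le \ell_N(g)=\ell$. Hence $\ell_N(z_g)\le \ell_N(g')+\ell_N(g)\le \ell'+\ell\le 2\ell$.

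The heart of the matter is to pass from this bound in the word metric of $N$ to a bound of the form $O(\ell^k)$ in the word metric of $\gamma_k(N)$ itself. Osin's Lemma~\ref{l:osin} already yields a polynomial bound $\ell_{\gamma_k(N)}(z_g)\le P(2\ell)$; what remains is to see that the degree of $P$ may be taken to be $k$. I would deduce this from Karidi's Theorem~\ref{t:karidi}: fixing Mal'cev coordinates on $N\otimes\bR$ adapted to the lower central series, an element of $N$ lying in a ball of radius $r$ has its coordinates in the block corresponding to $\gamma_j(N)/\gamma_{j+1}(N)$ bounded in absolute value by $(ar)^j$. For $z_g\in\gamma_k(N)$ the coordinates in all lower blocks vanish, and those in the top block (which is $\gamma_k(N\otimes\bR)$, since $N$ is $k$--step) are $O(\ell^k)$ by the previous paragraph. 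Because $\gamma_k(N)$ is a finitely generated abelian group, its word metric is comparable to the sup--norm of the coordinate vector, so $\ell_{\gamma_k(N)}(z_g)=O(\ell^k)$. Alternatively one can argue directly with the Hall collection process (commutator calculus, cf.~\cite{MKS}): writing $z_g$ as a product of at most $2\ell$ generators and collecting to basic--commutator normal form introduces at most $O(\ell^j)$ occurrences of each weight--$j$ basic commutator, and since $z_g\in\gamma_k(N)$ and $N$ is $k$--step only the weight--$k$ basic commutators survive; as these generate $\gamma_k(N)$ we again obtain $\ell_{\gamma_k(N)}(z_g)=O(\ell^k)$.

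The main obstacle is precisely this last step: one must know that $\gamma_k(N)$ is distorted in $N$ with distortion of degree exactly $k$, not merely of some unspecified polynomial degree. Karidi's theorem supplies this, but its use requires a little care — one must check that, in a coordinate system adapted to the filtration, an element actually lying in $\gamma_k(N)$ has vanishing coordinates outside the top block, and that on the finitely generated abelian group $\gamma_k(N)$ the word length is bi--Lipschitz to the sup--norm of the exponent vector. Absent Karidi's theorem, one would reprove the needed estimate by bookkeeping the weight--$k$ terms produced in the collection process. The first two paragraphs are essentially formal, so the substance of the lemma is concentrated here.
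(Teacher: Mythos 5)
Your proof is correct, but it reaches the key estimate by a genuinely different route from the paper. Both arguments agree on the first, formal step: lift a geodesic word for $\bar g$ from $N/\gamma_k(N)$ to get $g'$ of length $\ell'$, set $z_g=(g')^{-1}g\in\gamma_k(N)$, and note $\ell_N(z_g)\le \ell'+\ell\le 2\ell$. The divergence is in how one upgrades this $N$--length bound to a $\gamma_k(N)$--length bound of the form $O(\ell^k)$. The paper argues via isoperimetry: the word $\tilde g g^{-1}$ is null in $N/\gamma_k(N)$, which is $(k-1)$--step nilpotent and so has Dehn function of degree at most $k$; hence $\tilde g g^{-1}$ is a product of $O(\ell^k)$ conjugates of the additional relators $R'\setminus R$, and since $\gamma_k(N)=NC(R')/NC(R)$ is central the conjugations are immaterial, giving $\ell_{\gamma_k(N)}(z_g)=O(\ell^k)$ directly. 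You instead appeal to Karidi's Theorem~\ref{t:karidi}: an element of word length $O(\ell)$ in $N$, viewed in Mal'cev coordinates adapted to the lower central series, has its $j$--th block bounded by $O(\ell^j)$; for $z_g\in\gamma_k(N)$ the lower blocks vanish, the top block is $O(\ell^k)$, and since $\gamma_k(N)$ is a lattice in the top vector group its word metric is bi-Lipschitz to the Euclidean norm, yielding the same bound. Both are sound. Your version has the advantage of relying only on a tool the paper has already stated (and will use again in the proof of Theorem~\ref{t:pos}), rather than importing the polynomial-Dehn-function result from \cite{BRS}; the paper's version is more purely combinatorial-group-theoretic, avoiding any pass through the Mal'cev completion and making the centrality of $\gamma_k(N)$ do the work explicitly. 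Your sketched alternative via the Hall collection process is also a standard and valid way to get exactly this degree-$k$ bound. One small point worth flagging when you invoke Karidi: the theorem is stated for the Lie group with a Riemannian metric, so one must (as the paper notes after Theorem~\ref{t:karidi}) pass through the quasi-isometry $N\hookrightarrow N\otimes\bR$; this only changes constants, not the exponent $k$, so your conclusion stands.
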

\begin{proof}
This results from the fact that $k$--step nilpotent groups have polynomial Dehn functions which are polynomials of degree no more than $k+1$.  For a proof of this fact, consult H. Short's notes in \cite{BRS}.

Choose an arbitrary finite presentation for $N$ with generators \[F=\{n_1,\ldots,n_m\}\] and a finite set $R$ of relations.  Descending modulo the last term of the lower central series, these elements also generate $N/\gamma_k(N)$.  We write $\overline{g}$ for the image of $g$ in $N/\gamma_k(N)$.  Since $F$ generates $N/\gamma_k(N)$, we let $w_{\overline{g}}$ be the shortest word in $F$ which represents $\overline{g}$ in $N/\gamma_k(N)$.  On the other hand, $w_{\overline{g}}$ can be viewed as an element of $N$ via the surjection $F_m\to N$.  This element can be viewed as a pullback of $\overline{g}$, and we will denote it by $\tilde{g}$.  Note that $\tilde{g}g^{-1}$ represents the identity in $N/\gamma_k(N)$ by definition.  It follows that $\tilde{g}g^{-1}$ has area at most $O(\ell^{k})$ in our chosen presentation, since $N/\gamma_k(N)$ is $(k-1)$--step nilpotent.

It follows that we only require $O(\ell^{k})$ conjugates of relators to express $\tilde{g}g^{-1}$ as the identity in $N/Z(N)$.  For the finite presentation \[N\cong\langle F\mid R\rangle,\] adjoin to $R$ a finite set of words to get a set of relations $R'$ for which \[N/\gamma_k(N)\cong\langle F\mid R'\rangle.\]  The normal closure of $R'$ in $N$ is precisely $\gamma_k(N)$, and only $O(\ell^{k})$ conjugates of elements of $R'$ are required to express $\tilde{g}g^{-1}$ as the identity in $N/\gamma_k(N)$ by the definition of the Dehn function.  Since $\gamma_k(N)$ is central in $N$, we have that conjugation by $N$ acts trivially on $NC(R')/NC(R)$, where $NC(--)$ denotes the normal closure in $F_m$.  Note that this conjugation action makes sense since by definition, $N\cong F_m/NC(R)$.  The conclusion of the lemma follows.
\end{proof}

Finally, one more tool:

\begin{lemma}\label{l:eps}
Suppose $G$ is a finitely generated group, $\phi\in\Aut(G)$, and suppose that there are constants $C>0$ and $K,k>1$ such that \[\ell(\phi^n(g))\leq C\cdot n^k\cdot K^n\] for $g$ in a finite generating set.  Then the entropy of $\phi$ is no more than $K$.
\end{lemma}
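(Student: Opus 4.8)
The plan is to argue straight from the definition of the entropy, namely $K_{\phi}=\max_{s\in S}\lim_{n\to\infty}\ell(\phi^n(s))^{1/n}$, where $S$ is the (finite) generating set used to define the word length $\ell$. The first step is a harmless reduction: if the finite generating set appearing in the hypothesis is not the same as $S$, write each element of $S$ as a word of bounded length in the hypothesized generating set. Using the subadditivity $\ell(\phi^n(g_1g_2))\leq \ell(\phi^n(g_1))+\ell(\phi^n(g_2))$ and $\ell(\phi^n(g^{-1}))=\ell(\phi^n(g))$, together with the fact that word metrics with respect to any two finite generating sets are bi-Lipschitz equivalent, the bound $\ell(\phi^n(s))\leq C'\cdot n^k\cdot K^n$ persists for every $s\in S$, with a possibly enlarged constant $C'$ in place of $C$. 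So we may assume the bound holds on $S$ itself.

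Now the argument is immediate. For each fixed $s\in S$ we take $n^{\mathrm{th}}$ roots to get $\ell(\phi^n(s))^{1/n}\leq (C')^{1/n}\,n^{k/n}\,K$. Since $(C')^{1/n}\to 1$ and $n^{k/n}=\exp(k\log n/n)\to 1$ as $n\to\infty$, we conclude $\lim_{n\to\infty}\ell(\phi^n(s))^{1/n}\leq K$; the limit on the left exists by the submultiplicativity of $n\mapsto\ell(\phi^n(s))$ recorded in the introduction, so no extra care is needed there. Taking the maximum over the finite set $S$ yields $K_{\phi}\leq K$. There is no real obstacle here: the entire content of the lemma is that a polynomial (indeed any subexponential) prefactor is invisible to the exponential growth rate detected by the $n^{\mathrm{th}}$-root limit, so iterates whose word length grows like $n^kK^n$ have entropy at most $K$.
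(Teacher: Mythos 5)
Your proof is correct and reaches the conclusion by a slightly more direct route than the paper. The paper fixes $\eps>0$, sets $c_n=Cn^kK^n/(K+\eps)^n$, applies the ratio test to see that $c_{n+1}/c_n\to K/(K+\eps)<1$ so $c_n\to 0$, deduces $\ell(\phi^n(g))\preceq (K+\eps)^n$, and concludes $K_\phi\leq K+\eps$ for every $\eps$, hence $K_\phi\leq K$. You instead take $n$th roots directly and observe that the prefactors $(C')^{1/n}$ and $n^{k/n}$ both tend to $1$, which gives $K_\phi\leq K$ in one step without introducing $\eps$ or invoking summability. The preliminary remark about changing the generating set is harmless bookkeeping that the paper suppresses (entropy is already noted to be independent of the generating set), and it doesn't affect the argument. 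Both proofs hinge on the same elementary fact: a subexponential prefactor does not change the exponential growth rate detected by the $n$th-root limit; yours just exhibits this more transparently.
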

\begin{proof}
Let $\eps>0$.  Consider \[c_n=\frac{Cn^k K^n}{(K+\eps)^n}.\]  The ratio $c_{n+1}/c_n$ is given by \[\frac{(n+1)^k\cdot K}{n^k\cdot (K+\eps)}.\]  Standard calculus shows that this ratio tends to a constant strictly less than one as $n$ tends to infinity, so that in fact the sequence $\{c_n\}$ is summable.  It follows that the entropy of $\phi$ is less than $K+\eps$, whence the claim.
\end{proof}

\section{Automorphisms of nilpotent groups}

Before we finally give a proof of Theorem \ref{t:pos}, the main result of this paper, we make one observation about eigenvalues and entropy:

\begin{lemma}
Let $A$ be a finitely generated, torsion--free abelian group, and let $\phi$ be an automorphism of $A$.  Suppose $\phi$ has an eigenvalue over $\bC$ which is not a root of unity.  Then the entropy of $\phi$ is strictly larger than one.
\end{lemma}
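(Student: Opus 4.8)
The plan is to reduce the statement to a fact about the spectral radius of an integer matrix, and then to combine Gelfand's spectral radius formula with Kronecker's theorem on algebraic integers.

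First I would invoke the structure theorem: since $A$ is finitely generated and torsion-free, $A\cong\bZ^d$ with $d=\rk A$, and under this identification $\phi$ becomes multiplication by a matrix $M\in GL_d(\bZ)$. Taking the standard basis $S=\{e_1,\ldots,e_d\}$ as the generating set, the word length of $g=(g_1,\ldots,g_d)$ is exactly $\sum_i|g_i|$, so $\ell(\phi^n(e_j))$ equals the $\ell^1$-norm of the $j$th column of $M^n$, and $\max_j\ell(\phi^n(e_j))$ is precisely the operator norm $\|M^n\|$ of $M^n$ acting on $(\bR^d,\|\cdot\|_1)$. Since the per-generator limits $\lim_n\ell(\phi^n(e_j))^{1/n}$ exist by submultiplicativity and there are only finitely many generators, the maximum over $j$ can be pulled outside the limit, so $K_\phi=\lim_n\|M^n\|^{1/n}$; by Gelfand's formula this equals the spectral radius $\rho(M)=\max_\lambda|\lambda|$, the maximum taken over the complex eigenvalues $\lambda$ of $M$.

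It then remains to show $\rho(M)>1$, and this is where the arithmetic enters. I would argue by contradiction: if every eigenvalue $\lambda$ of $M$ satisfied $|\lambda|\le 1$, then each $\lambda$ is a nonzero algebraic integer (nonzero because $\det M=\pm1$) whose minimal polynomial over $\bQ$ divides the characteristic polynomial $\det(xI-M)\in\bZ[x]$; hence every Galois conjugate of $\lambda$ is again an eigenvalue of $M$ and therefore also lies in the closed unit disc. By Kronecker's theorem, a nonzero algebraic integer all of whose conjugates lie in the closed unit disc is a root of unity, so every eigenvalue of $M$ would be a root of unity, contradicting the hypothesis. Hence $\rho(M)>1$ and $K_\phi=\rho(M)>1$.

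I expect the only genuinely non-formal ingredient to be the appeal to Kronecker's theorem; the remaining steps are the standard dictionary between word length on $\bZ^d$, matrix norms, and spectral radius. It is worth emphasizing that the integrality $M\in GL_d(\bZ)$ is used in an essential way: a real matrix with all eigenvalues on the unit circle but a nontrivial Jordan block has only polynomial growth of iterates and hence still has entropy $1$, so it is precisely the arithmetic constraint, via Kronecker, that upgrades the hypothesis to $\rho(M)>1$.
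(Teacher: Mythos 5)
Your proof is correct and follows essentially the same route as the paper: both rest on Kronecker's theorem that an algebraic integer all of whose Galois conjugates lie in the closed unit disc (equivalently, on the unit circle) is a root of unity, applied to the integral characteristic polynomial of $\phi$. The only difference of substance is that you make the identification $K_\phi=\rho(M)$ explicit via Gelfand's formula, whereas the paper leaves that step implicit and instead deduces an eigenvalue of modulus strictly greater than one directly from the conjugate-off-the-unit-circle observation combined with $\det M=\pm 1$.
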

\begin{proof}
All the eigenvalues of $\phi$ are algebraic integers, since $\phi$ is defined over $\bZ$ and the characteristic polynomial of $\phi$ is an integral monic polynomial.  Let $\al$ be an algebraic integer with $|\al|=1$.  It is well--known that $\al$ is a root of unity if and only if all the Galois conjugates of $\al$ have unit length.  It follows that if $\phi$ has an eigenvalue which is not root of unity then it has an eigenvalue off the unit circle.  Since $\phi$ is an automorphism of $A$, the product of its eigenvalues is equal to $\pm 1$, so that $\phi$ has an eigenvalue of length strictly larger than one.
\end{proof}

\begin{proof}[Proof of Theorem \ref{t:pos}]
We will proceed by induction on the length $k$ of the lower central series of $N$.  The case where $k=1$ is trivial.  We write $\gamma_k(N)$ for the $k^{th}$ term of the lower central series of $N$, which is the last nontrivial one.  We have that the word growth entropy on $N/\gamma_k(N)$ is $K_{\overline{\phi}}$, by the inductive hypothesis.  Let $g$ be an element of $N$ and $\overline{g}$ its image in $N/\gamma_k(N)$.  We may rewrite $g$ in $N$ as $g'\cdot z_g$, where $g'$ has length $\ell(\overline{g})$ and $z_g\in \gamma_k(N)$.  We choose a finite generating set for $N$ and perform this rewriting process for each element in the generating set.  We now apply powers of $\phi$ to a candidate element of $N$ for maximal word growth, which we may assume to be one of the generators.

The effect of applying $\phi$ to an element $g$ is to replace each generator $x$ occurring in an expansion for $g$ by a word of length $\ell(\overline{\phi(x)})$, perturbed by an element of $\gamma_k(N)$.  The action of $\phi$ on $\gamma_k(N)$ is linear since $\gamma_k(N)$ is a torsion--free, finitely generated abelian subgroup of $N$, with spectral radius no larger than $(K_{\overline{\phi}})^{k}$.  By Theorem \ref{t:karidi}, there exists an $a>1$ such that balls of radius $(r/a)^{k}$ in $\gamma_k(N)$ are contained in balls of radius $r$ inside of $N$.  Thus, if $z\in \gamma_k(N)$ and $\ell_{\gamma_k(N)}(\phi^n(z))\sim K^{kn}$, we have that $\ell_N(\phi^n(z))\sim K^n$.

Now we can estimate $\ell(\phi^n(g))$.  The length of the image $\overline{\phi^n(g)}$ of $\phi^n(g)$ in $N/\gamma_k(N)$ is $\sim K_{\overline{\phi}}^n$.  Each generator of $N$ occurring in an expansion for $\overline{\phi^n(g)}$ contributes a bounded length word in $\gamma_k(N)$ upon application of $\phi$.  Applying Lemma \ref{l:isoperimetric}, we see that $\phi(g)$ can be written as \[\phi(g)'\cdot z_{\phi(g)},\] where $\ell(\phi(g)')=\ell(\overline{\phi(g)})$ and $\ell_{\gamma_k(N)}(z_{\phi(g)})\sim \ell(\phi(g))^k$.  It follows that \[\phi^n(g)=\phi^n(g)'\cdot\prod_{i=0}^{n-1}\phi^i(z_{\phi^{n-i}(g)}).\]  Here, $\phi^n(g)'$ has length $\sim(K_{\overline{\phi}})^n$, and each $z_{\phi^{n-i}(g)}$ has length $O(\ell(\overline{\phi^{n-i}(g)})^k)$ within $\gamma_k(N)$.  The justification behind this last estimate is that for any $\eps>0$ there is a $C>1$ and an $M$ such that for any $n,m>M$, \[\frac{1}{C}(K-\eps)^{km}\leq \ell(z_{\phi^m(g)})\leq C(K+\eps)^{km},\] and the size of the unit ball in $\gamma_k(N)$ is scaled by no more than $C(K+\eps)^{kn}$ upon the application of $\phi^n$.

Within $\gamma_k(N)$, $\phi^i(z_{\phi^{n-i}(g)})$ has length $\preceq (K_{\overline{\phi}})^{kn}$, so that in $N$, \[\prod_{i=0}^{n-1}\phi^i(z_{\phi^{n-i}(g)})\] has length $\preceq n^{1/k}(K_{\overline{\phi}})^{n}$.  It follows that $\ell(\phi^n(g))\preceq n\cdot (K_{\overline{\phi}})^n$.  By Lemma \ref{l:eps}, the result follows.
\end{proof}

Corollaries \ref{t:trivial} and \ref{t:uni} follow immediately from the fact that unipotent automorphisms of torsion--free abelian groups have spectral radii equal to one, and from Theorem \ref{t:pos}.

The remainder of the article is devoted to a proof of Theorem \ref{t:unipoly} and a discussion of its consequences.
We first would like to reduce the discussion to homologically trivial automorphisms of nilpotent groups.  Let $N$ be a finitely generated nilpotent group and let $\phi$ be an automorphism of $N$ which induces a unipotent automorphism of $H_1(N,\bZ)$.  We lose no information by assuming there is no torsion in $H_1(N,\bZ)$, since there is a map of the map $N\to T(H_1(N,\bZ))$ whose kernel has finite index in $N$ and is torsion free.  Here $T(H_1(N,\bZ))$ is the torsion subgroup of $H_1(N,\bZ)$, which can be made into a quotient by choosing a basis for the torsion--free part.  Proposition \ref{p:fi} shows that the entropy of $\phi$ as an automorphism of the kernel is equal to the entropy of $\phi$.

Consider the semidirect product \[1\to N\to N'\to \bZ\to 1,\] where the $\bZ$--conjugation action is given by $\phi$.  An easy calculation using the fact that $\phi-I$ induces a nilpotent endomorphism of $H_1(N,\bZ)$ shows that the lower central series of $N'$ terminates in finitely many steps, and the total number of terms in the lower central series is bounded above by the Hirsch rank of $N$ plus one (cf. \cite{R}).  It follows that $N'$ is nilpotent.  Furthermore, $N'$ contains a copy of $N$ together with the data of the action of $\phi$ acting on $N$.  Since the action of $\phi$ is given by conjugation by a group element $t\in N'$, we have that the action of $\phi$ extends to all of $N'$ in a way which acts trivially on $H_1(N',\bZ)$.

It is much easier to analyze the structure of homologically trivial automorphisms of nilpotent groups.  Let $N$ be a nilpotent group.  It is a standard fact that the commutator bracket is a bilinear function \[[\cdot,\cdot]:\gamma_i(N)/\gamma_{i+1}(N)\times\gamma_j(N)/\gamma_{j+1}(N)\to\gamma_{i+j}(N)/\gamma_{i+j+1}(N).\]  This fact is usually stated as ``the lower central series is a central filtration".  The following result is basic (cf. \cite{BL}):
\begin{lemma}
Let $\phi\in\Aut(N)$ and suppose that $\phi$ acts trivially on $H_1(N,\bZ)=N^{ab}$.  Then $\phi$ acts trivially on $\gamma_i(N)/\gamma_{i+1}(N)$ for all $i$.
\end{lemma}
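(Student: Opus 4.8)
The plan is to induct on $i$, the base case $i=1$ being exactly the hypothesis. For the inductive step, fix $i\ge 2$ and suppose $\phi$ acts trivially on $\gamma_{i-1}(N)/\gamma_i(N)$; we of course also still know it acts trivially on $N^{ab}=\gamma_1(N)/\gamma_2(N)$. Since $\gamma_i(N)=[\gamma_{i-1}(N),N]$, the quotient $\gamma_i(N)/\gamma_{i+1}(N)$ is generated as an abelian group by the images of commutators $[x,y]$ with $x\in\gamma_{i-1}(N)$ and $y\in N$. As $\phi$ is an automorphism, $\phi([x,y])=[\phi(x),\phi(y)]$, so it suffices to prove $[\phi(x),\phi(y)]\equiv[x,y]\pmod{\gamma_{i+1}(N)}$ for all such $x,y$.

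To verify this, write $\phi(x)=xc$ with $c\in\gamma_i(N)$ (using triviality of $\phi$ on $\gamma_{i-1}(N)/\gamma_i(N)$) and $\phi(y)=yd$ with $d\in\gamma_2(N)$ (using triviality on $N^{ab}$), and expand $[xc,yd]$ using the standard commutator identities $[uv,w]=[u,w]^v[v,w]$ and $[u,vw]=[u,w][u,v]^w$. Every factor other than $[x,y]$ that appears is a commutator whose lower--central weight is forced to be at least $i+1$: for instance $[c,y]\in[\gamma_i(N),N]\subseteq\gamma_{i+1}(N)$, $[x,d]\in[\gamma_{i-1}(N),\gamma_2(N)]\subseteq\gamma_{i+1}(N)$, $[c,d]\in\gamma_{i+2}(N)$, and conjugating an element of $\gamma_i(N)$ by $c$ or $d$ changes it only by a commutator lying in $[\gamma_i(N),N]\subseteq\gamma_{i+1}(N)$. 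Hence $[xc,yd]\equiv[x,y]\pmod{\gamma_{i+1}(N)}$, which completes the induction. Conceptually this is just the statement that the bilinear bracket $\gamma_{i-1}(N)/\gamma_i(N)\times\gamma_1(N)/\gamma_2(N)\to\gamma_i(N)/\gamma_{i+1}(N)$ of the central filtration is natural with respect to $\phi$, so an automorphism trivial on both factors of the source must be trivial on the target, since the target is generated by the image of the bracket.

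There is no serious obstacle here; the only point requiring care is the bookkeeping that each stray commutator produced when pushing $c$ and $d$ past other elements lands in $\gamma_{i+1}(N)$, which is immediate from the filtration property $[\gamma_p(N),\gamma_q(N)]\subseteq\gamma_{p+q}(N)$. The essential content is simply the recognition that homological triviality propagates up the lower central series one graded piece at a time.
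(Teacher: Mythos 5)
Your proof is correct and follows essentially the same route as the paper's: induct on $i$, write $\phi(x)=xc$ with $c\in\gamma_i(N)$ and $\phi(y)=yd$ with $d\in\gamma_2(N)$, expand the commutator, and observe that every stray term lands in $\gamma_{i+1}(N)$ via the filtration property $[\gamma_p(N),\gamma_q(N)]\subseteq\gamma_{p+q}(N)$. The only cosmetic difference is that you make explicit the step that $\gamma_i(N)/\gamma_{i+1}(N)$ is generated by images of weight-$(i-1,1)$ commutators and invoke the standard identities $[uv,w]=[u,w]^v[v,w]$, $[u,vw]=[u,w][u,v]^w$, whereas the paper rearranges the expanded word $c^{-1}t^{-1}c'^{-1}a^{-1}tcac'$ by hand; these are the same computation presented with different bookkeeping.
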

\begin{proof}
The proof is by induction on commutator depth.  Suppose that $a\in\gamma_{i-1}(N)$ and $t\in N$ is arbitrary.  We may assume that $\phi(t)=t\cdot c$ where $c\in [N,N]$ and $\phi(a)=a\cdot c'$, where $c'\in\gamma_i(N)$.  The commutator $[t,a]$ lies in $\gamma_i(N)$, and it suffices to show that $\phi([t,a])$ differs from $[t,a]$ by an element of $\gamma_{i+1}(N)$.  We have \[\phi([t,a])=\phi(t)^{-1}\phi(a)^{-1}\phi(t)\phi(a)=c^{-1}t^{-1}c'^{-1}a^{-1}tcac'.\]  Switching $t^{-1}$ with $c'^{-1}$ perturbs $c'^{-1}$ by an element of $\gamma_{i+1}(N)$, since $c'\in\gamma_i(N)$.  So, there is a $c''\in\gamma_{i+1}(N)$ such that \[\phi([t,a])=c''c^{-1}t^{-1}a^{-1}tcac'.\]  Since $c\in [N,N]$ already, conjugating $t^{-1}a^{-1}t$ by $c$ perturbs it by an element of $\gamma_{i+1}(N)$.  The claim follows.
\end{proof}

It follows that if $\phi$ is a homologically trivial automorphism of any group $N$ and $g\in\gamma_i(N)$ then $\phi(g)=g\cdot z$, where $z\in\gamma_{i+1}(N)$.

\begin{proof}[Proof of Theorem \ref{t:unipoly}]
Suppose first that $\phi$ is a homologically trivial automorphism of $N$.  In this case, the proof is by induction on the length of the lower central series.  We will prove that for each $g\in N$, $\ell(\phi^n(g))$ is bounded by a polynomial in $n$ which depends on $g$ and the length of the lower central series of $N$.

Fix $g\in N$.  We may assume that $g\in N\setminus [N,N]$.  We write $\phi(g)=g\cdot c_2$, where $c_2\in [N,N]=\gamma_2(N)$.  Then $\phi(c_2)=c_2\cdot c_3$, where $c_3\in \gamma_3(N)$.  We continue to write $\phi(c_i)=c_ic_{i+1}$, where $c_{i+1}\in\gamma_{i+1}(N)$.  Eventually, we get $\phi(c_k)=c_k$ for some $k$, since $N$ is nilpotent.  Let $M$ be the maximum of the lengths of the $\{c_i\}$ which arise this way.  Note that $M$ depends on $g$, in general.

We now iterate $\phi$ and write $\phi^n(g)$ in terms of the result for $\phi^{n-1}(g)$ according to the rule $\phi(c_i)=c_ic_{i+1}$ for $i<k$.  We can estimate the length of $\phi^n(g)$ by counting the number of occurrences of each $c_i$.  Note that there are $n$ occurrences of $c_2$ in $\phi^n(g)$.  The number of occurrences of $c_3$ is approximately \[\sum_{i=1}^{n-1} i= O(n^2).\]  In general, the number of occurrences of $c_j$ is approximately \[\sum_{i=1}^{n-j+2} i^{j-2}=O(n^{j-1}).\]  If $N$ is $k$-step nilpotent, the length of $\ell(\phi^n(g))$ grows like $Mn^{k-1}$.

Now suppose that $\phi$ acts unipotently on $H_1(N,\bZ)$.  In this case, we construct the larger semidirect product $N'$ as above.  If $g\in N<N'$, we have that $\ell_{N'}(\phi^n(g))=O(Q(n))$ for some polynomial $Q$.  The inclusion of $N\to N'$ is not a quasi--isometry, so the word length of $\phi^n(g)$ as an element of $N$ can be vastly different from its length as an element of $N'$.  However, Osin's result (Lemma \ref{l:osin}) shows that the distortion is bounded by a polynomial $P$.  Composing $P$ and $Q$, we see that $\ell_N(\phi^n(g))=O(P\circ Q(n))$, the desired result.
\end{proof}

Consider word growth in $H_1(N,\bZ)$ under the action of a unipotent automorphism.  It is clear that $\phi$ is conjugate (over $\bR$, at least) to a matrix which is upper triangular and has ones down the diagonal.  Elementary observations analogous to those in the proof of Theorem \ref{t:unipoly} show that word growth in $H_1(N,\bZ)$ under iterations of $\phi$ is bounded by a polynomial of degree no larger than $\rk H_1(N,\bZ)-1$.  There is however a fundamental distinguishing feature of homologically trivial automorphisms:

\begin{prop}\label{l:trivial}
Let $N$ be a nilpotent group and $\phi\in \Aut(N)$ a homologically trivial automorphism.  Let $N'$ be the semidirect product of $N$ by $\bZ$, with the conjugation action of $\bZ$ given by $\phi$.  Then the lengths of the upper central series of $N$ and $N'$ coincide.
\end{prop}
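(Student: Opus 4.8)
The plan is to compare the two upper central series term by term and show that conjugation by the stable letter $t\in N'$ does not enlarge any center, given that $\phi$ is homologically trivial. First I would recall the setup: $N'$ sits in $1\to N\to N'\to\bZ\to 1$ with $t$ acting on $N$ by $\phi$, and by the discussion preceding the proposition $N'$ is nilpotent. Write $Z_i(N)$ and $Z_i(N')$ for the terms of the respective upper central series, so $Z_0=1$, $Z_1=Z(\cdot)$, and $Z_{i+1}/Z_i$ is the center of the quotient by $Z_i$. The claim that the two series have the same length is equivalent to the claim that the nilpotency classes of $N$ and $N'$ agree; so one direction is automatic, since $N\le N'$ forces the class of $N$ to be at most that of $N'$. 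The content is the reverse inequality.

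The key step is to produce, for each $i$, an inclusion $Z_i(N')\hookrightarrow Z_i(N)$, or at least to show $Z_i(N')\subset N$ and $Z_i(N')\subseteq Z_i(N)$. I would argue by induction on $i$. For the base case, suppose $x\in Z(N')$; writing $x=n t^k$ with $n\in N$, commuting with every element of $N$ and with $t$ itself, one sees $k=0$ (project to $\bZ$ and use that $N'/N$ is nontrivial abelian only if... — more carefully, $[x,t]=1$ together with $[x,N]=1$ and the structure of the extension forces $x\in N$), so $x\in Z(N)\cap\{\text{elements fixed by }\phi\}$. For the inductive step, assuming $Z_{i}(N')\subseteq Z_i(N)$, an element $\bar x$ of $Z_{i+1}(N')/Z_i(N')$ commutes modulo $Z_i(N')$ with everything, in particular with $t$; the commutator $[x,t]$, which by definition of the extension equals $\phi(x)^{-1}x$ up to the right convention, must therefore lie in $Z_i(N')\subseteq Z_i(N)$. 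Here is where homological triviality enters decisively: by the Lemma preceding this proposition, $\phi$ acts trivially on every $\gamma_j(N)/\gamma_{j+1}(N)$, equivalently $\phi(g)g^{-1}\in\gamma_{j+1}(N)$ whenever $g\in\gamma_j(N)$, and more to the point the ``deviation map'' $g\mapsto\phi(g)g^{-1}$ is highly degenerate — it shifts up the lower central series. One combines this with the inductive control on $Z_i(N')$ to force $x\in N$, and then the condition that $x$ is central modulo $Z_i(N')$ inside $N'$ restricts to the condition that $x$ is central modulo $Z_i(N)$ inside $N$, giving $x\in Z_{i+1}(N)$.

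The main obstacle I anticipate is the interaction between the ascending central series and the stable letter: a priori an element $n\in N$ could fail to be central modulo $Z_i(N)$ in $N$ yet become central modulo $Z_i(N')$ in $N'$ because the larger group has ``more room'' — this is exactly the phenomenon that fails for quasi-isometrically embedded subgroups, as the remark after Proposition~\ref{p:fi} warns. The reason it does not happen here is that $Z_i(N')$ is itself constrained (by induction) to lie inside $N$, so passing to $N'$ does not actually enlarge the relevant quotient in a way that helps centralize new elements; the homological triviality of $\phi$ is what prevents $t$ from ``absorbing'' commutators. I would therefore be careful to phrase the induction so that the statement being proved at stage $i$ is the conjunction ``$Z_i(N')\subseteq N$ and $Z_i(N')=Z_i(N)^{\phi}$'' (fixed points of $\phi$), which is strong enough to push through; the equality of lengths then follows because $Z_c(N)=N$ implies, via $\phi$ acting trivially on the graded pieces, that $Z_c(N')=N'$ as well, so the series for $N'$ terminates at the same index $c$.
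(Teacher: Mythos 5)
Your inductive claim that $Z_i(N')\subseteq Z_i(N)$ (and in particular $Z_i(N')\subseteq N$) is false, and the base case already fails. Take $\phi$ to be the identity automorphism, which is certainly homologically trivial: then $N'=N\times\bZ$ and $t\in Z(N')$ but $t\notin N$. In general $Z(N')$ consists of the elements $nt^k$ with $\phi(n)=n$ and $\phi^k$ equal to conjugation by $n^{-1}$, which has no reason to lie inside $N$. Taking $N$ to be the Heisenberg group with $\phi$ the identity also refutes your strengthened claim $Z_i(N')=Z_i(N)^{\phi}$, since there $Z(N')=Z(N)\times\bZ\supsetneq Z(N)=Z(N)^{\phi}$. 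Your own closing worry about how the series ``terminates at the same index $c$'' is precisely the fatal point: if $Z_i(N')\subseteq N$ held for all $i$, the upper central series of $N'$ could never reach $N'$, contradicting the nilpotence of $N'$. The step ``$[x,t]=1$ together with $[x,N]=1$ forces $x\in N$'' is where the argument actually breaks.

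For comparison, the paper's own proof runs in the opposite direction, asserting $Z(N)\subseteq Z(N')$ (``$t$ commutes with the center of $N$''), but this inclusion also does not hold in general: let $N=\bZ\times H$ with $H$ the integral Heisenberg group, let $z$ generate the $\bZ$ factor, $w$ generate $[H,H]$, and set $\phi(z)=zw$ with $\phi$ the identity on $H$. Then $\phi$ is homologically trivial (the correction $w$ dies in $H_1$), yet $z\in Z(N)$ while $[t,z]\neq 1$ in $N'$, so $z\notin Z(N')$. A clean argument instead uses the lower central series, which is what the preceding lemma directly controls: for $n\in\gamma_i(N)$ one has $[t,n]\in\gamma_{i+1}(N)$, and since each $\gamma_i(N)$ is characteristic in $N$ and hence normal in $N'$, a short induction gives $\gamma_i(N')=\gamma_i(N)$ for all $i\geq 2$. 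Thus the lower central series of $N$ and $N'$ terminate at the same index, the nilpotency classes agree, and the length of the upper central series of a nilpotent group equals its class.
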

\begin{proof}
Let $t$ and $N_0\subset N$ generate $N'$.  Clearly the Hirsch rank of $N'$ is no more than $\rk N+1$.  It is easy to see that $t$ and $N_0$ commute with the center of $N$, so that $Z(N)\subset Z(N')$.  It follows that $N'/Z(N)$ has rank no more than $\rk (N/Z(N))+1$.  Observe finally that $\rk H_1(N',\bZ)=\rk H_1(N,\bZ)+1$, so we obtain the conclusion by induction on the length of the upper central series of $N$.
\end{proof}

The assumption that $\phi$ act trivially on the homology of $N$ is essential since we can evidently construct nilpotent groups with arbitrarily long central series as iterated semidirect products with unipotent monodromies.  If $N$ is a finitely generated nilpotent group, the degree of polynomial growth of words inside of $N$ increases as the nilpotence degree increases (see \cite{dlH} and the references therein).  If we form a semidirect product \[1\to N\to N'\to\bZ\to 1,\] where the $\bZ$--conjugation action is given by an automorphism which acts unipotently but nontrivially on the homology of $N$, we expect the nilpotence degree of $N'$ to be larger than that of $N$, so that $\phi$ cannot induce linear growth rate of words in $N$.

We end with a question related to Theorem \ref{t:unipoly}:

\begin{quest}
Let $N$ be a finitely generated nilpotent group and let $\phi$ be an automorphism which induces a unipotent automorphism of $H_1(N,\bQ)$.  We have seen that for each $g\in N$, $\ell(\phi^n(g))=O(n^r)$ for some $r$.  Is it possible to bound $r$ as a function of $\rk H_1(N,\bQ)$?
\end{quest}

\end{document}